\numberwithin{equation}{section}
\newtheorem {theorem}{Theorem}[section]
\newtheorem {proposition}[theorem]{Proposition}
\newtheorem {lemma}[theorem]{Lemma}
\newtheorem {remark}[theorem]{Remark}
\theoremstyle{definition}
\newcommand{\Var}{\operatorname{Var}}
\newcommand{\conv}{\textup{conv}}
\newcommand{\vol}{\textup{vol}}
\newcommand{\dist}{\textup{dist}}
\newcommand{\ind}[1]{\mathbbm{1}_{\{#1\}}}
\def\EE{\mathbb{E}}
\def\HH{\mathbb{H}}
\def\NN{\mathbb{N}}
\def\PP{\mathbb{P}}
\def\RR{\mathbb{R}}
\def\XX{\mathbb{X}}
\def\YY{\mathbb{Y}}
\def\cA{\mathcal{A}}
\let\@fnsymbol\@alph
\begin{document}
	
\title{\bfseries Central limit theorems for the nearest neighbour embracing graph in Euclidean and hyperbolic space}

\author{Holger Sambale\footnotemark[1],\;\; Christoph Th\"ale\footnotemark[2]\;\; and Tara Trauthwein\footnotemark[3]}

\date{}
\renewcommand{\thefootnote}{\fnsymbol{footnote}}
\footnotetext[1]{Bielefeld University, Germany. Email: hsambale@math.uni-bielefeld.de}

\footnotetext[2]{Ruhr University Bochum, Germany. Email: christoph.thaele@rub.de}

\footnotetext[3]{University of Oxford, United Kingdom. Email: tara.trauthwein@stats.ox.ac.uk}

\maketitle

\begin{abstract} 
Consider a stationary Poisson process $\eta$ in the $d$-dimensional Euclidean or hyperbolic space and construct a random graph with vertex set $\eta$ as follows. First, each point $x\in\eta$ is connected  by an edge to its nearest neighbour, then to its second nearest neighbour and so on, until $x$ is contained in the convex hull of the points already connected to $x$. The resulting random graph is the so-called nearest neighbour embracing graph. The main result of this paper is a quantitative description of the Gaussian fluctuations of geometric functionals associated with the nearest neighbour embracing graph. {More precisely, the total edge length, more general length-power functionals and the number of vertices with given outdegree are considered}.\\[2mm]
    {\bf Keywords}. Central limit theorem, hyperbolic stochastic geometry, nearest neighbour embracing graph, Poisson process, random geometric graph, stabilizing functional, stochastic geometry.\\
    {\bf MSC}. 60D05, 60F05, 60G55.
\end{abstract}

\section{Introduction}

\begin{figure}[t]
    \centering
    \includegraphics[width=.6\textwidth]{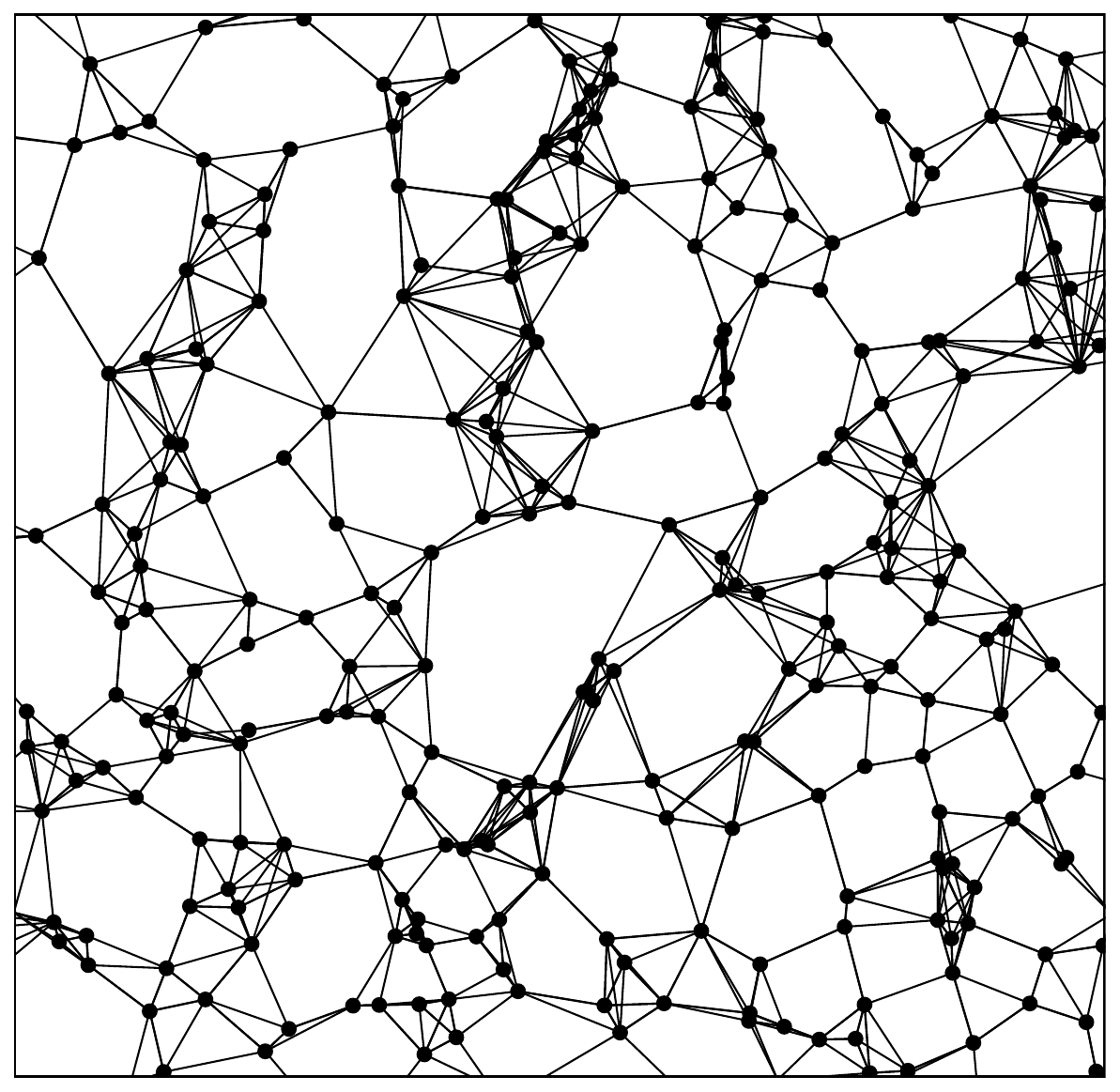}
    \caption{A realization of the NNE graph in $2$-dimensional Euclidean space.}
    \label{fig:SimulationEuclidean}
\end{figure}

Consider a stationary Poisson process $\eta$ in $\RR^d$. We sequentially construct a random graph with vertex set $\eta$ by connecting each point $x\in\eta$ to its nearest neighbour, then to its second nearest neighbour and so on. This is continued until $x$ is contained in the convex hull of the points already connected to $x$, that is, until $x$ lies in the convex hull of its $k$ nearest neighbours, where $k$ is chosen to be minimal. The resulting random graph is called the \emph{nearest neighbour embracing graph}, in short $\mathrm{NNE}(\eta)$. It was introduced by S.\,N.\ Chiu and I.\ Molchanov in \cite{CM}, and has found applications in good illumination problems \cite{ABM,AHBMP}. We also refer to \cite{CCCW} for a construction algorithm and to Figure \ref{fig:SimulationEuclidean} for a simulation {in Euclidean space}.

In the present paper we deal with geometric functionals of the nearest neighbour embracing graph in $\RR^d$ and in the $d$-dimensional hyperbolic space $\HH^d$. Studying random graphs in hyperbolic space of constant negative curvature has several motivations. First, changing the space underlying a geometric random graph model allows to distinguish between its universal properties and those which are curvature-dependent. Moreover, many real-world networks display a hyperbolic structure, which is well reflected by geometric random  graphs as studied in \cite{BogunaEtAl,FvdHMS,KrioukovEtAl}. To deal with both set-ups simultaneously we denote the underlying space by $\XX^d$ with $\XX\in\{\RR,\HH\}$ and by $d(\,\cdot\,,\,\cdot\,)$ the standard Riemannian metric on $\XX^d$. Moreover, we write $\vol$ for the Riemannian volume on $\XX^d$. For $x\in\XX^d$ and $r>0$, the notation $B(x,r)=\{y\in\XX^d:d(x,y)\leq r\}$ stands for the closed ball of radius $r$ centred at $x$. If $\XX^d=\RR^d$, we also use the notation $B_r:=B(0,r)$ for the ball of radius $r$ centred at the origin and we also denote for $\XX^d=\HH^d$ by $B_r$ the ball of radius $r$ centred at an arbitrary but fixed point $p\in\HH^d$, which we also refer to as the origin. 

Now, let $\eta$ be a stationary Poisson process on $\XX^d$, that is, $\eta$ is a Poisson process on $\XX^d$ with intensity measure equal to the Riemannian volume. Based on $\eta$ we construct the nearest neighbour embracing graph $\mathrm{NNE}(\eta)$ in $\XX^d$ as described above. Furthermore, we denote the set of \textit{outgoing} neighbours of $x\in\eta$ by $N(x,\eta)$, i.e., the set of neighbours the point $x$ connects to in order to close its convex hull. Finally, the set of \emph{all} neighbours of $x$ will be denoted by $\tilde{N}(x,\eta)$. The first class of geometric functionals associated with $\mathrm{NNE}(\eta)$ we are interested in are of the form
\[
F_t^{(\alpha)} := \frac{1}{2} \sum_{x \in \eta|_{B_t}} \sum_{y \in \tilde{N}(x,\eta)} d(x,y)^\alpha
\]
for some exponent $\alpha \ge 0$ and $t \ge 2$. Here, $\eta|_{B_t}$ denotes the restriction of $\eta$ to $B_t$. Note that in $F_t^{(\alpha)}$, every edge between two points within $B_t$ is counted with weight $1$, while edges which only have one endpoint in $B_t$ are counted with weight $1/2$. 

Our first result is a quantitative central limit theorem for $F_t^{(\alpha)}$, where the speed of convergence is measured by the Wasserstein and the Kolmogorov distance. We recall that the Wasserstein distance between the laws of two (real-valued) random variables $X$ and $Y$ is defined by
\[
\dist_W(X,Y) = \sup\{|\EE h(X) - \EE h(Y)| \colon h \in \mathrm{Lip}(1)\},
\]
where $\mathrm{Lip}(1)$ is the set of all functions $h \colon \RR \to \RR$ which are Lipschitz continuous with constant at most $1$. The Kolmorogov distance between the laws of $X$ and $Y$ is given by the supremum of the distance of their distribution functions, i.\,e.,
\[
\dist_K(X,Y) = \sup_{x \in \RR}|\PP(X \le x) - \PP(Y \le x)|.
\]
We can now formulate our first main result.

\begin{theorem}\label{thm:main}
    Let $N$ be a standard Gaussian random variable and $\diamondsuit\in\{K,W\}$. Then, for any $\alpha \ge 0$ and any $t \ge 2$, we have
    \[
    \dist_\diamondsuit \bigg( \frac{F_t^{(\alpha)} - \EE F_t^{(\alpha)}}{\sqrt{\Var F_t^{(\alpha)}}}, N \bigg) \leq\frac{c}{\sqrt{\vol(B_t)}},
    \]
    where $c>0$ is a constant only depending on the choice of $\XX^d$ and on $\alpha$.
\end{theorem}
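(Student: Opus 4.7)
The natural route is to recast $F_t^{(\alpha)}$ as a sum of local scores on the Poisson process $\eta$ and to invoke a general quantitative central limit theorem for stabilizing Poisson functionals. Setting
\[
\xi_\alpha(x,\eta) := \tfrac{1}{2}\sum_{y \in \tilde{N}(x,\eta)} d(x,y)^\alpha,
\]
we have $F_t^{(\alpha)} = \sum_{x \in \eta \cap B_t} \xi_\alpha(x,\eta)$. My plan is to apply a Berry--Esseen-type result for such Poisson sums --- the archetype being the Malliavin--Stein theorems of Lachi\`eze-Rey--Peccati--Yukich and Last--Peccati--Schulte, together with their hyperbolic counterparts for spaces of constant negative curvature --- which yield, for scores satisfying exponential stabilization, uniform higher moment bounds, and a linear variance lower bound, a rate of order $1/\sqrt{\vol(B_t)}$ in both $\dist_W$ and $\dist_K$.

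\textbf{Stabilization of the score.} The core step is to establish exponential stabilization of $\xi_\alpha$, which I would split into an outgoing and an incoming contribution. For the outgoing part, the relevant radius $R_{\mathrm{out}}(x,\eta)$ is the distance from $x$ to its furthest point in $N(x,\eta)$; a standard cone-covering argument around $x$ shows that as soon as a fixed finite family of cones with apex $x$ each contains a point of $\eta$, the convex hull of these points already embraces $x$, so that the tail of $R_{\mathrm{out}}$ is controlled by void probabilities of small angular sectors and decays exponentially both in $\RR^d$ and $\HH^d$. For the incoming part, the radius $R_{\mathrm{in}}(x,\eta)$ measures how far a point $y$ may lie while still having $x\in N(y,\eta)$: this forces $y$ to be unembraced by all points of $\eta$ closer to it than $x$, an event whose probability again decays exponentially in $d(x,y)$ by the same cone argument. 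Taking $R(x,\eta):=R_{\mathrm{out}}(x,\eta)+R_{\mathrm{in}}(x,\eta)$ gives a stabilization radius for $\xi_\alpha$; moreover, each weight $d(x,y)^\alpha$ appearing in $\xi_\alpha$ is dominated by $R(x,\eta)^\alpha$, which still enjoys exponential moments and hence uniformly bounded moments of every order.

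\textbf{Moment and variance bounds.} With exponential stabilization at hand, the uniform higher moment bounds on $\xi_\alpha(x,\eta\cup\omega)$ for small finite $\omega$ required by the abstract CLT follow routinely, since inserting finitely many extra points can only enlarge $R(x,\cdot)$ by a comparable amount. The genuine obstacle is the lower variance bound $\Var F_t^{(\alpha)}\geq c\,\vol(B_t)$. My plan here is to exploit the Poincar\'e-type representation of $\Var F_t^{(\alpha)}$ in terms of add-one costs $F_t^{(\alpha)}(\eta+\delta_y)-F_t^{(\alpha)}(\eta)$, combined with a local perturbation argument: on a volume-order family of well-separated small regions inside $B_t$, one constructs a deterministic configuration ensuring that inserting a point produces an add-one cost of order one with positive probability, and exponential stabilization then makes these local contributions essentially independent so that their variances add up. The main subtlety, especially in $\HH^d$ where $\vol(\partial B_t)$ is of the same order as $\vol(B_t)$, is to confine the perturbations to a sufficiently inner shell of $B_t$ and to control edge-crossings of $\partial B_t$ via the exponential tails of $R(\cdot,\eta)$; once this is handled, the abstract stabilization CLT delivers Theorem~\ref{thm:main}.
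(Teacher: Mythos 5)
Your proposal follows essentially the same route as the paper's proof: a stabilization radius split into an outgoing and an incoming part with exponential tails proved by a cone-covering void-probability argument, uniform moment bounds for the add-one costs, the Last--Peccati--Schulte/Malliavin--Stein second-order Poincar\'e bounds (which are already stated for general metric measure spaces, so no separate hyperbolic counterpart is needed), and a lower variance bound of order $\vol(B_t)$ via add-one costs and a local deterministic configuration, which the paper implements through the Schulte--Trapp inequality. One minor correction: the stabilization radius should be $\max\{R_{\mathrm{out}},\,2R_{\mathrm{in}}\}$ (as in the paper's Lemma \ref{lem:DOloc}) rather than $R_{\mathrm{out}}+R_{\mathrm{in}}$, since verifying an incoming edge from $y$ requires knowledge of $\eta$ on $B(y,d(x,y))\subset B(x,2d(x,y))$; this does not affect the exponential tail bounds or the rest of the argument.
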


The proof of Theorem \ref{thm:main} is based on the Malliavin--Stein method for functionals of Poisson processes. More precisely, we will use a normal approximation bound for such Poisson functionals as developed in \cite{LPS} with an extension taken from \cite{TT}. We remark at this point that an essential step in the proof of Theorem \ref{thm:main} will consist in establishing a lower variance bound for the functionals $F_t^{(\alpha)}$. More precisely, we will shown in Lemma \ref{lem:VarBd} below that for any $t\geq 2$ one has that
\[
\Var F_t^{(\alpha)} \ge c \,\vol(B_t),
\]
where $c>0$ is a constant depending only on $\alpha$ and the choice of $\XX^d$.

\begin{remark}
    We only consider non-negative exponents $\alpha \geq 0$. It is reasonable to expect that similar central limit results can be achieved when considering negative exponents $\alpha>-\frac{d}{2}$, by combining the proof of our Theorem~\ref{thm:main} with the bounds developed for the nearest neighbour graph in \cite{TT}. The speed of convergence we expect is then of the order of $\vol(B_t)^{-1/2}$ for $\alpha>-d/4$ and $\vol(B_t)^{-1+1/p}$ for $-d/2<\alpha\leq-d/4$, where $p \in (1,2]$ is chosen such that $2p\alpha+d>0$.
\end{remark}

\begin{figure}[t]
    \centering
    \includegraphics[width=.6\textwidth]{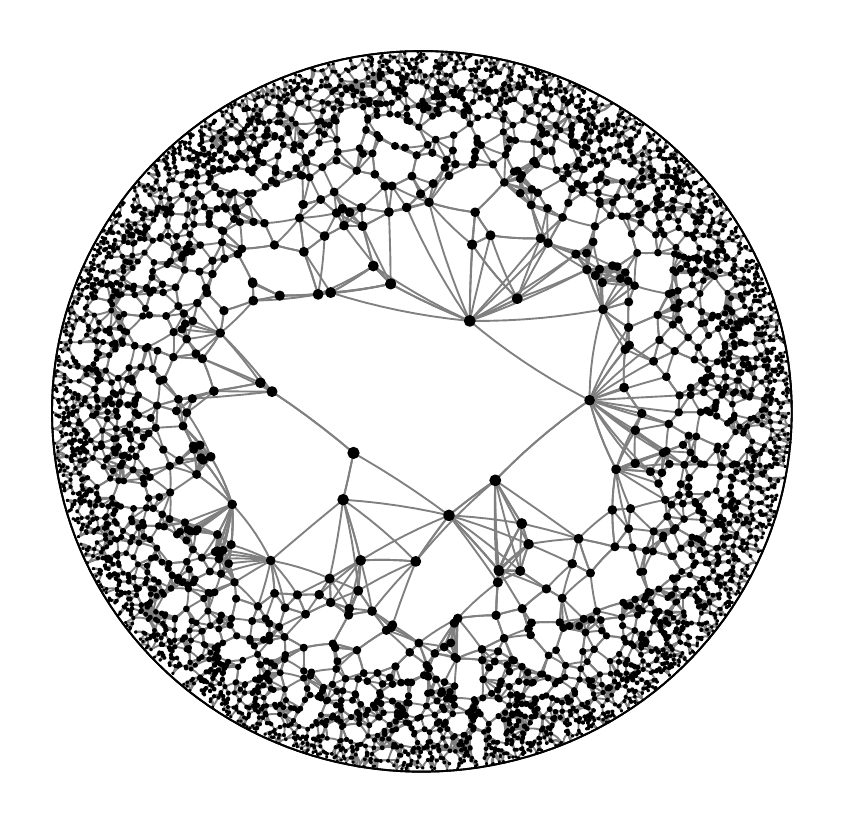}
    \caption{A realization of the NNE graph in $2$-dimensional hyperbolic space.}
    \label{fig:SimulationHyperbolic}
\end{figure}

\medspace

{Recalling that $N(x,\eta)$ is the set of neighbours} necessary to close the convex hull around $x$, we write
\[
\overline{\mathrm{deg}}(x) = \overline{\mathrm{deg}}(x,\eta) := |N(x,\eta)|.
\]
In \cite{CM}, the quantity $|N(x,\eta)|$ is referred to as the degree of $x$ in $\mathrm{NNE}(\eta)$, even if it strictly speaking differs from the actual degree of $x$ in the graph-theoretic sense (which equals $|\tilde{N}(x,\eta)|$). We will call $\overline{\mathrm{deg}}(x)$  the outdegree of $x$ in $\mathrm{NNE}(\eta)$. We denote the number of points of $\eta$ in $B_t$ with fixed outdegree $k$ by
\[
G_t^{(k)} := \sum_{x \in \eta|_{B_t}} \ind{\overline{\mathrm{deg}}(x)=k},
\]
where we assume $k \ge d+1$ since we almost surely need at least $d+1$ points to close the convex hull around $x$.

In \cite[Section 3]{CM}, a central limit theorem for $G_t^{(k)}$ was proven for $\XX^d = \RR^d$ equipped with the usual Euclidean metric under the assumption that the rescaled asymptotic variance is positive. Our second main contribution consists in a quantification of this result and in a removal of the additional assumption on the asymptotic variance.

\begin{theorem}\label{thm:main2}
    {Let $N$ be a standard Gaussian random variable and $\diamondsuit\in\{K,W\}$. Then, for any $k \ge d+1$ and any $t \ge 2$, we have
    \[
    \dist_\diamondsuit \bigg( \frac{G_t^{(k)} - \EE G_t^{(k)}}{\sqrt{\Var G_t^{(k)}}}, N \bigg) \leq\frac{c}{\sqrt{\vol(B_t)}},
    \]
    where $c>0$ is a constant only depending on the choice of $\XX^d$ and on $k$. Moreover,
    $$
    \dist_K \bigg( \frac{G_t^{(k)} - \EE G_t^{(k)}}{\sqrt{\Var G_t^{(k)}}}, N \bigg) \geq \frac{\hat{c}}{\sqrt{\vol(B_t)}} 
    $$
    for another constant $\hat{c}>0$ depending on the same parameters.
    }
\end{theorem}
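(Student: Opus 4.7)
The plan is to adapt the template of Theorem~\ref{thm:main} for the upper bound in both distances, and then to exploit the integer-valuedness of $G_t^{(k)}$ to obtain the matching Kolmogorov lower bound. Writing $G_t^{(k)} = \sum_{x \in \eta|_{B_t}} \xi_k(x,\eta)$ with the bounded score $\xi_k(x,\eta) := \ind{\overline{\mathrm{deg}}(x,\eta) = k}$, the functional fits into the same Malliavin--Stein framework of \cite{LPS,TT} employed for $F_t^{(\alpha)}$. Because the scores take values in $\{0,1\}$, the first- and second-order difference operators $D_{y_1}\xi_k$ and $D^2_{y_1,y_2}\xi_k$ are pointwise bounded by $1$, so the moment conditions of the Malliavin--Stein bound become trivial. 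Stabilization is also direct: $\xi_k(x,\eta)$ is determined by the positions of the $k$ nearest neighbours of $x$ in $\eta$, so $R(x,\eta) := d(x, x_k(\eta))$ serves as a radius of stabilization, and its tail $\PP(R(x,\eta) > r)$ decays exponentially in $\vol(B(x,r))$ (with a polynomial prefactor in $k$) via a standard Poisson count estimate. Combined with the variance lower bound described below, this delivers the claimed upper bound of order $\vol(B_t)^{-1/2}$ in both $d_K$ and $d_W$.

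The crucial analytic input is the two-sided variance bound $c\,\vol(B_t) \leq \Var G_t^{(k)} \leq C\,\vol(B_t)$. The upper bound is a direct consequence of the Poincar\'e inequality for Poisson functionals combined with the stabilization of $\xi_k$. For the lower bound I would mimic the proof of Lemma~\ref{lem:VarBd} by a local modification argument: choose a maximal $2R_0$-separated set $\{z_i\} \subset B_{t-R_0}$ for a fixed $R_0$ depending only on $d$ and $k$, whose cardinality grows like $\vol(B_t)$, and identify for each $i$ an ``umbrella'' event $E_i$, measurable with respect to $\eta \cap B(z_i, R_0)$ and of probability bounded away from both $0$ and $1$, under which a specifically prescribed local modification within $B(z_i, R_0)$ deterministically changes $G_t^{(k)}$ by a controlled nonzero amount. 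A Chatterjee-type variance lower bound then produces the desired linear-in-volume estimate. In both $\XX^d = \RR^d$ and $\XX^d = \HH^d$, the umbrella probabilities are uniform in $z_i$ thanks to the homogeneity of $\XX^d$ under its isometry group.

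For the matching Kolmogorov lower bound, set $S_t := (G_t^{(k)} - \EE G_t^{(k)})/\sqrt{\Var G_t^{(k)}}$ and $\delta_t := 1/\sqrt{\Var G_t^{(k)}}$, so the two-sided variance bound gives $\delta_t \asymp \vol(B_t)^{-1/2}$. The upper CLT yields $\PP(S_t \in [-1,1]) \geq \PP(N \in [-1,1]) - 2\,d_K(S_t,N)$, which is bounded below by $1/2$ for $t$ sufficiently large. Since $S_t$ is supported on an arithmetic progression with spacing $\delta_t$, the interval $[-1,1]$ contains at most $2/\delta_t+1$ atoms of $S_t$, and by pigeonhole some atom $a^* \in [-1,1]$ carries mass $p^* := \PP(S_t = a^*) \geq c\delta_t$. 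Since the standard normal CDF is continuous while the CDF of $S_t$ jumps by $p^*$ at $a^*$, we conclude $d_K(S_t,N) \geq p^*/2 \geq \hat{c}/\sqrt{\vol(B_t)}$. The step I expect to be most delicate is the variance lower bound, since it requires an explicit geometric construction of umbrella events that reliably toggle the outdegree between $k$ and a different integer; the hyperbolic case needs additional care due to the exponential volume growth of balls, but homogeneity of $\HH^d$ under isometries allows most of the Euclidean intuition to transfer with only minor quantitative adjustments.
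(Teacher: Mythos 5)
Your overall architecture (Malliavin--Stein upper bound plus a lattice-atom argument for the Kolmogorov lower bound) matches the paper, and the lower-bound part is essentially the Englund-type argument the paper invokes; your use of a Poincar\'e-inequality bound $\Var G_t^{(k)}\le C\vol(B_t)$ there is legitimate. The upper-bound part, however, contains a genuine error. In the framework you cite (Proposition \ref{thm:LPS6.1}), the moment conditions \eqref{Cond1}--\eqref{Cond2} and the probabilities $\PP(D_xF\ne 0)$, $\PP(D_{x,y}F\ne 0)$ concern the add-one costs of the \emph{whole functional} $G_t^{(k)}$, not of the individual scores $\xi_k$. These add-one costs are neither bounded by $1$ nor localized in the ball of radius $d(x,x_k(\eta))$: inserting $x$ changes $\overline{\mathrm{deg}}(z)$ for every $z\in\eta$ which adopts $x$ as an embracing neighbour, i.e.\ with $x\in N(z,\eta+\delta_x)$, and such $z$ may lie far beyond the $k$-th nearest neighbour of $x$ (any point seeing a large empty half-ball reaches out to $x$). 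Hence $D_xG_t^{(k)}$ can be as large as the number of such points, and your proposed stabilization radius fails for the functional. This is precisely what the paper's Lemma \ref{lem:DOloc} is designed for: the radius $R(x,\eta)=\max\{R_1,2R_2\}$ contains the component $R_2(x,\eta)$ tracking points that would connect to $x$, its exponential tail needs a separate Mecke-equation/empty-half-ball argument (Lemma \ref{lem:exptails3}), and the moment conditions then follow from $|D_xG_t^{(k)}|\le 1+\eta(B(x,R(x,\eta)))$ together with Lemma \ref{lem:etamom}. Bounded scores alone would only help if you switched to a score-based stabilization CLT, which is not the route you describe.

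The second gap is the variance lower bound, which you rightly flag as the delicate step but leave as a black box. It is not enough to produce local events on which a prescribed modification changes $G_t^{(k)}$ by a ``controlled nonzero amount'': inserting a point can simultaneously move other points' outdegrees into or out of the value $k$, so the net change can cancel, and excluding this is exactly where the geometric work lies. The paper's construction (Lemma \ref{lem:VarBd2}) places $d+1$ points in two inner rings so that the inserted centre $x$ has outdegree exactly $k$ (after padding with $k-d-1$ further close points), and crucially fills an outer ring with enough points that every $y$ outside $B(x,2/3)$ has at least $k+2$ points closer than $x$; consequently any point whose outdegree is affected has outdegree at least $k+1$ both before and after the insertion, giving $D_xG_t^{(k)}=1$ on an event of positive probability. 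In addition, whether you use the Schulte--Trapp bound (Proposition \ref{thm:ST1.1}) as the paper does or a Chatterjee-type inequality, you still need the second-order control \eqref{CondST}, which again rests on the stabilization tails missing from your first step. Without these two ingredients the proposal does not yield the stated rates, although its skeleton is the right one.
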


As already anticipated above, an essential step in the proof of Theorem \ref{thm:main2} is a lower variance bound for $G_t^{(k)}$ of the form
\[
\Var G_t^{(k)} \ge c \,\vol(B_t),
\]
where $c>0$ is a constant depending only on $k$ and the choice of $\XX^d$. In the case of the Euclidean space, this shows that the variance assumption made in \cite{CM} is always fulfilled. {Furthermore, the proof of the upper bound in Theorem \ref{thm:main2} is again based on the Malliavin--Stein method for Poisson functionals in the form taken from \cite{LPS,TT}. However, we note that in contrast to Theorem \ref{thm:main} we also have here a lower bound on the rate of convergence if $\diamondsuit=K$. We expect the same to be true for $\diamondsuit=W$ and, similarly, for Theorem \ref{thm:main} as well.}

\medspace

The remaining parts of this paper are structured as follows. In Section \ref{sec:Preliminaries} we collect some background material about Poisson functionals and gather some geometric facts which will be used throughout the paper. In Section \ref{sec:RadiusStab} we consider the changes the nearest neighbour embracing graph undergoes if a new point is added to the generating point cloud. We study the local nature of these changes by analysing properties of the so-called radius of stabilization. The proofs of our main results are the content of the final Section \ref{sec:Proofs}.

\section{Preliminaries}\label{sec:Preliminaries}

\subsection{Poisson functionals}

Throughout this section we consider an arbitrary metric measure space, denoted by $(\YY,d,\lambda)$, and a Poisson process $\eta$ on $\YY$ with intensity measure $\lambda$. As a general reference to Poisson processes we refer to the monograph \cite{LP}. A random variable $F = f(\eta)$ only depending on $\eta$ is known as a Poisson functional. For $x\in\YY$ and a Poisson functional $F=f(\eta)$ we denote by $D_xF := f(\eta+\delta_x) - f(\eta)$ the difference operator, where $\eta+\delta_x$ is the configuration which arises if we add the point $x$ to $\eta$. Second order differences $D_{x,y} F$ are defined by iteration, i.\,e., $D_{x,y}F := D_y(D_xF) = f(\eta+\delta_x+\delta_y) - f(\eta+\delta_x) - f(\eta+\delta_y) + f(\eta)$ for $x,y \in \YY$.

The main tools we use in order to derive our central limit theorems are bounds established by Last, Peccati and Schulte in \cite{LPS}, with an extension taken from \cite{TT}. In particular, we use \cite[Theorem 6.1]{LPS} with $p_1 = p_2 = 1$ for the Wasserstein distance and derive an analogous bound from \cite[Theorem 3.4]{TT} with $p=2$ for the Kolmogorov distance. 

\begin{proposition}\label{thm:LPS6.1}
    Let $F$ be an integrable Poisson functional such that $\EE \int (D_x F)^2 \lambda (dx) < \infty$ and $\Var F > 0$, and let $N$ be a standard Gaussian random variable. Assume there are constants $c_1, c_2 > 0$ such that
    \begin{align}
        \mathbb{E}|D_x F|^5 &\le c_1\qquad \text{for $\lambda$-a.e. $x \in \YY$},\label{Cond1}\\
        \mathbb{E}|D_{x,y} F|^5 &\le c_2\qquad \text{for $\lambda^2$-a.e. $(x,y) \in \YY^2$}\label{Cond2}.
    \end{align}
    Writing $\bar{c} = \max\{1,c_1,c_2\}$, we have
    \begin{align*}
        d_W \Big( \frac{F - \EE F}{\sqrt{\Var F}}, N \Big) \le \, &\frac{3 \bar{c}}{\Var F} \Big[\int \Big( \int \PP(D_{x,y}F \ne 0)^{1/20} \lambda(dy) \Big)^2 \lambda(dx)\Big]^{1/2}\\
        &+ \frac{\bar{c}}{(\Var F)^{3/2}} \int \PP(D_x F \ne 0)^{2/5} \lambda (dx)
    \end{align*}
    as well as
    \begin{align*}
        d_K \Big( \frac{F - \EE F}{\sqrt{\Var F}}, N \Big) \le \,
        &\frac{3\bar{c}}{\Var F} \Big[\int \Big( \int \PP(D_{x,y}F \ne 0)^{1/20} \lambda(dy) \Big)^2 \lambda(dx)\Big]^{1/2}\\
        &+ \frac{2\bar{c}}{\Var F} \left(\int \PP(D_x F \ne 0)^{1/5} \lambda(dx)\right)^{1/2}\\
        &+\frac{13\bar{c}}{\Var F} \Big(\int \int \PP(D_{x,y}F \ne 0)^{1/20} \lambda(dy) \lambda(dx)\Big)^{1/2}.
    \end{align*}
\end{proposition}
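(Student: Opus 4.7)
The plan is to view the proposition as a specialization of known general Malliavin--Stein bounds from \cite{LPS,TT} to the setting of uniformly bounded fifth moments, together with one Hölder inequality to put things in the stated probabilistic form.

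For the Wasserstein estimate, I would invoke \cite[Theorem~6.1]{LPS} with parameters $p_1 = p_2 = 1$, so that the moments appearing in the general bound are of order $4+p_1 = 4+p_2 = 5$. Hypotheses \eqref{Cond1} and \eqref{Cond2} then allow us to pull these fifth moments out of the relevant integrals as the constants $c_1$ and $c_2$. The remaining mixed integrands are reduced to pure probability integrals by the standard Hölder bound
\[
\mathbb{E}|D_x F|^a \;\le\; (\mathbb{E}|D_x F|^5)^{a/5}\, \PP(D_x F \ne 0)^{(5-a)/5}, \qquad 0 \le a \le 5,
\]
and its analogue for $D_{x,y} F$. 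Choosing $a = 3$ on the first-order piece yields the exponent $2/5$ on $\PP(D_x F \ne 0)$, and applying the interpolation inside the nested integrals (once within the inner integral in $y$ and once when taking the outer $L^2$-integration in $x$) produces the exponent $1/20$ on $\PP(D_{x,y} F \ne 0)$.

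For the Kolmogorov estimate, I would repeat the argument with \cite[Theorem~3.4]{TT} specialised to $p = 2$. That theorem enhances the LPS bound by two additional summands adapted to the sharper Kolmogorov metric; after the same Hölder reduction they produce the last two terms in the statement, with exponents $1/5$ (coming from the choice $a = 4$) and $1/20$, respectively.

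The only real subtlety, and the place where the hard part of the argument lies, is the bookkeeping of the Hölder conjugate exponents inside the nested integrals of \cite{LPS,TT} so that they match the exponents $2/5$, $1/5$ and $1/20$ in the stated form; once this is matched the proposition follows by direct substitution, and no further stochastic geometry input is needed.
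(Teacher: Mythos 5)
Your proposal is correct and takes essentially the same route as the paper: the proposition is there obtained precisely by specializing \cite[Theorem 6.1]{LPS} with $p_1=p_2=1$ for the Wasserstein bound and by deriving the analogous Kolmogorov bound from \cite[Theorem 3.4]{TT} with $p=2$, where the uniform fifth-moment hypotheses \eqref{Cond1} and \eqref{Cond2} together with H\"older interpolation turn the moment integrals into the stated probability integrals with exponents $2/5$, $1/5$ and $1/20$. This is the same bookkeeping you describe, so there is nothing to add.
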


{To derive a central limit theorem using Proposition \ref{thm:LPS6.1}, it is crucial to show a lower bound for the variance $\Var(F)$ of the right order. To do so in our set-up,} we shall use \cite[Theorem 1.1]{ST}, which reads as follows.

\begin{proposition}\label{thm:ST1.1}
    Let $F$ be an integrable Poisson functional such that $\EE \int (D_x F)^2 \lambda (dx) < \infty$ and
    \begin{equation}\label{CondST}
        \EE \Big[\int (D_{x,y} F)^2 \lambda^2(d(x,y))\Big] \le c \EE \Big[\int (D_x F)^2 \lambda(dx)\Big]
    \end{equation}
    for some constant $c \ge 0$. Then,
    \[
    \Var F \ge \frac{4}{(c+2)^2} \EE \Big[\int (D_x F)^2 \lambda(dx)\Big].
    \]
\end{proposition}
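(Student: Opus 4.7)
My plan is to use the Wiener--It\^{o} chaos expansion of $F$ to reduce the claim to a scalar inequality for the sequence of chaos norms. Under the hypotheses, $F$ lies in $L^2$ (comparing its Fock-space norm to $\EE\int(D_xF)^2\lambda(dx)$ yields this by a standard Poincar\'e-type argument, and integrability of $F$ then controls the zeroth chaos), hence $F = \EE F + \sum_{n \ge 1} I_n(f_n)$ with symmetric kernels $f_n \in L^2(\lambda^n)$ and the Wiener isometry $\Var F = \sum_{n \ge 1} n!\,\|f_n\|_{L^2(\lambda^n)}^2$. Moreover, the kernels are recovered from $F$ via $f_n(x_1,\dots,x_n) = \tfrac{1}{n!}\,\EE D^n_{x_1,\dots,x_n}F$, so every object entering the statement admits a clean Fock-space representation.

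The next step is to express the two integrals of the proposition in this basis. Using the annihilation identity $D_xI_n(f_n) = n\,I_{n-1}(f_n(x,\cdot))$ and its second iterate, together with orthogonality of distinct Poisson chaoses, one obtains with $a_n := n!\,\|f_n\|_{L^2(\lambda^n)}^2 \ge 0$ the identities
\begin{align*}
A := \EE \int (D_xF)^2\,\lambda(dx) &= \sum_{n\ge 1} n\,a_n,\\
B := \EE \int (D_{x,y}F)^2\,\lambda^2(d(x,y)) &= \sum_{n\ge 2} n(n-1)\,a_n,
\end{align*}
while $\Var F = \sum_{n\ge 1} a_n$. The hypothesis thus reads $B \le cA$, and the target inequality becomes $\Var F \ge \tfrac{4}{(c+2)^2}\,A$.

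A one-line Cauchy--Schwarz finishes the argument: since $\big(\sum n\,a_n\big)^2 \le \big(\sum a_n\big)\big(\sum n^2\,a_n\big)$ and $n^2 = n + n(n-1)$, one has
$$
A^2 \le \Var F \cdot (A+B) \le (c+1)\,A \cdot \Var F,
$$
so $\Var F \ge A/(c+1)$, which already implies the stated bound because $(c+2)^2 \ge 4(c+1)$. The main obstacle is not this Cauchy--Schwarz step but the careful bookkeeping behind the chaos identities for $A$ and $B$; once $D_x$ is recognised as an operator lowering the order of each chaos by one and introducing a factor $n$, both expressions unwind directly, and the rest is algebra.
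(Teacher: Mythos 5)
Your argument is essentially correct, but note that the paper does not prove this statement at all: Proposition \ref{thm:ST1.1} is quoted verbatim from Schulte--Trapp \cite[Theorem 1.1]{ST}, so there is no in-paper proof to compare against. Your route — the Fock space/chaos representation $\Var F=\sum_{n\ge1}a_n$, $\EE\int(D_xF)^2\,\lambda(dx)=\sum_{n\ge1}n\,a_n$, $\EE\int(D_{x,y}F)^2\,\lambda^2(d(x,y))=\sum_{n\ge2}n(n-1)\,a_n$ with $a_n=n!\|f_n\|_{L^2(\lambda^n)}^2$, followed by Cauchy--Schwarz — is the natural one (and is in the spirit of the Fock-space arguments in \cite{ST}); your final inequality even yields the sharper constant $\Var F\ge A/(c+1)\ge \tfrac{4}{(c+2)^2}A$, since $(c+2)^2-4(c+1)=c^2\ge0$. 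Two points deserve more care than your sketch gives them. First, the reduction to $F\in L^2$: either observe that if $F\notin L^2$ then $\Var F=\infty$ and there is nothing to prove, or make the Poincar\'e argument precise (truncate $F$ at level $k$, use that truncation is $1$-Lipschitz so $|D_xF_k|\le|D_xF|$, apply the Poincar\'e inequality to the bounded functional $F_k$, and let $k\to\infty$ via Fatou together with $\EE|F|<\infty$); as stated, ``a standard Poincar\'e-type argument'' is too vague, since the Poincar\'e inequality itself is usually formulated only for square-integrable functionals. Second, the identities for $A$ and $B$ use that the pathwise difference operator may be applied term by term to the chaos series, i.e.\ that it coincides with the annihilation (Malliavin) operator; this is a genuine, though standard, ingredient (Last--Penrose), valid here because $\EE\int(D_xF)^2\,\lambda(dx)<\infty$, and the second-order identity follows by applying the first-order one to $D_xF$ for $\lambda$-a.e.\ $x$ (both identities may be read in $[0,\infty]$). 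With these two points filled in, and the trivial remark that the case $A=0$ needs no argument before dividing by $A$, your proof is complete and self-contained.
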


\subsection{Geometric ingredients}

In this section we gather the geometric ingredients needed for our purpose, including in particular basic definitions and facts from hyperbolic geometry. As a general references for hyperbolic geometry we mention the monographs \cite{CannonEtAl,Chavel,Ratcliffe}. The hyperbolic space $\HH^d$ is the unique simply connected $d$-dimensional Riemannian manifold of constant negative curvature $-1$. We denote the corresponding Riemannian metric by $d(\,\cdot\,,\,\cdot\,)$ and the Riemannian volume by $\vol$. There are several models of the hyperbolic space like the Beltrami--Klein model, the Poincar\'{e} ball model or the Poincar\'{e} half-plane model. We emphasize that all our results are model independent.

By $p$ we denote an arbitrarily chosen fixed point of $\HH^d$, referred to as the origin. Recalling the notation $B_r = B(p,r)$ for the closed ball of radius $r>0$ centred at $p$ it holds that
\begin{equation}\label{eq:volBt}
\vol (B_r) = {d\kappa_d}\int_0^r \sinh^{d-1}(u)\, du,
\end{equation}
where $\kappa_d := \vol(B(0,1))=\frac{\pi^{d/2}}{\Gamma(1+d/2)}$ is the volume of the $d$-dimensional Euclidean unit ball, see \cite[Eq.\ (3.26)]{Ratcliffe}. In fact, this identity is a direct consequence of the polar integration formula in hyperbolic geometry, which states that
\begin{equation}\label{eq:PolIntHypGeom}
    \int_{\HH^d} f(x) \,\vol(dx) = {d\kappa_d} \int_{S^{d-1}(p)} \int_0^\infty \sinh^{d-1}(u) f(\exp_p(uv))\,du\sigma_p(dv),
\end{equation}
see \cite[pages 123--125]{Chavel}. Here, $S^{d-1}(p)$ denotes the $(d-1)$-dimensional unit sphere in the tangent space $T_p\HH^d$ of $\HH^d$ at $p$, the measure $\sigma_p$ is the normalized spherical Lebesgue measure on $S^{d-1}(p) \subset T_p\HH^d$, and $\exp_p \colon T_p\HH^d \to \HH^d$ stands for the exponential map which in our case is applied to the point $uv \in T_p\HH^d$. {The point $\exp_p(uv)$ in $\HH^d$ arises from $p$ by traveling in $\HH^d$ distance $v>0$ along a geodesic ray in direction $u\in S^{d-1}(p)$.}

As can be seen from \eqref{eq:volBt}, the growth of $\vol(B_r)$ as a function of $r$ is exponential, which is in sharp contrast to the corresponding situation in $\RR^d$, where $\vol(B_r)=\kappa_dr^d$. In particular, it holds {in the hyperbolic setting} that
\begin{equation}\label{eq:hypvol}
    \gamma_d e^{r(d-1)}\le \vol(B_r)\leq\Gamma_d e^{r(d-1)},
\end{equation}
for all $r\geq 2$, where $\gamma_d,\Gamma_d>0$ are dimension-dependent constants, see \cite[Lemma 7]{HHT} or \cite[Lemma~4]{OttoThaele}.

\medspace

In our proofs it will often turn out to be possible to treat the $d$-dimensional hyperbolic space and the $d$-dimensional Euclidean space simultaneously. We shall use for this generic space the notation $\XX^d$ with $\XX\in\{\HH,\RR\}$. Moreover, we let $p\in\XX^d$ be a fixed origin, which for $\HH^d$ we choose as above and for $\RR^d$ we take for $p$ the point with Euclidean coordinates $(0,\ldots,0)$. For $z\in\XX^d$ we denote by $T_z\XX^d$ the tangent space of $\XX^d$ at $z$. We mention that in the Euclidean case, this is just $\RR^d$ itself. Similarly, we let $\exp_p:T_p\XX^d\to\XX^d$ be the exponential map at $p$, which in the hyperbolic case was introduced above. If $\XX^d=\RR^d$, the function $\exp_p$ is just the identity map on $\RR^d$.

\section{The radius of stabilization}\label{sec:RadiusStab}

A central element of our proofs is the fact that for the nearest neighbour embracing graph $\mathrm{NNE}(\eta)$, there exists a ``radius of stabilization'' for the difference operator $D_x$, i.\,e., a random variable $R(x,\eta)$ such that all changes between $\mathrm{NNE}(\eta)$ and $\mathrm{NNE}(\eta+\delta_x)$ happen within $B(x,R(x,\eta))$. The purpose of this section is to develop properties of this stabilization radius.

\begin{lemma}\label{lem:DOloc}
For $x \in \XX^d$, define 
\[
    R(x,\eta) := \max\{R_1(x,\eta),2R_2(x,\eta)\},
\]
where
\begin{align*}
  &R_1(x,\eta) := \max_{z \in N(x,\eta)} d(z,x),\\
  &R_2(x,\eta) := \max\{d(z,y) : z\in \eta, x\in N(z,\eta+\delta_x),y\in N(z,\eta)\}.
\end{align*}
Then for all $x \in \XX^d$, $\alpha \in \RR$ and $k \geq d+1$, it holds that
\begin{align*}
    &D_x F_t^{(\alpha)}(\eta) = D_xF_t^{(\alpha)} (\eta \cap B(x,R(x,\eta)))\\
    \intertext{and}
    &D_x G_t^{(k)}(\eta) = D_xG_t^{(k)} (\eta \cap B(x,R(x,\eta))).
\end{align*}
\end{lemma}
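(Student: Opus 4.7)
The plan is to isolate the vertices of $\mathrm{NNE}(\eta)$ whose outgoing neighbour set changes when $x$ is inserted, and then to check that the bookkeeping for $D_xF_t^{(\alpha)}$ and $D_xG_t^{(k)}$ takes place entirely inside $B(x,R(x,\eta))$.

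First I would identify the affected vertices. Fix $z\in\eta$ and list its nearest neighbours in $\eta$ as $y_1,y_2,\ldots$ in increasing order of distance, so that $N(z,\eta)=\{y_1,\ldots,y_m\}$. Let $j$ be the rank of $x$ in the corresponding ordering in $\eta+\delta_x$, so the shifted list reads $y_1,\ldots,y_{j-1},x,y_j,y_{j+1},\ldots$. If $j>m$, the first $m$ entries of the shifted ordering coincide with $y_1,\ldots,y_m$, the convex hull around $z$ closes again at step $m$, and $N(z,\eta+\delta_x)=N(z,\eta)$. If $j\le m$, the construction at $z$ is still running at step $j$ and $x\in N(z,\eta+\delta_x)$. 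Hence only the points of
\[
Z:=\{z\in\eta\,:\,x\in N(z,\eta+\delta_x)\},
\]
together with $x$ itself, can contribute to $D_xF_t^{(\alpha)}$ or $D_xG_t^{(k)}$; moreover, for $z\in Z$ the rank-shift analysis also yields $N(z,\eta+\delta_x)\subseteq\{y_1,\ldots,y_m,x\}$.

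Next I would bound the geometry. For $z\in Z$ the inequality $d(z,x)\le d(z,y_j)$, combined with $y_j\in N(z,\eta)$ and the definition of $R_2(x,\eta)$, gives $d(z,x)\le R_2(x,\eta)$, so $Z\subseteq B(x,R_2(x,\eta))$. The same definition supplies $d(z,y)\le R_2(x,\eta)$ for every $y\in N(z,\eta)\cup N(z,\eta+\delta_x)$, and the triangle inequality places every endpoint of a modified edge at a vertex of $Z$ inside $B(x,2R_2(x,\eta))\subseteq B(x,R(x,\eta))$. The outgoing neighbours of $x$ itself lie in $B(x,R_1(x,\eta))\subseteq B(x,R(x,\eta))$ by definition of $R_1$.

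Finally, I would verify that the outgoing sets of the relevant vertices are the same when computed from $\eta$ or from $\eta\cap B(x,R(x,\eta))$. For $w\in\eta\setminus B(x,R(x,\eta))$ and $z\in Z\cup\{x\}$, the triangle inequality gives
\[
d(z,w)\ \ge\ d(x,w)-d(z,x)\ >\ R(x,\eta)-R_2(x,\eta)\ \ge\ R_2(x,\eta),
\]
which exceeds every distance arising in $N(z,\eta)\cup N(z,\eta+\delta_x)$ (with $R_2$ replaced by $R_1$ when $z=x$). Consequently the nearest neighbour ordering at $z$, truncated at the rank where its convex hull closes, is unaltered by the removal of $\eta\setminus B(x,R(x,\eta))$, and the sets $N(z,\eta)$, $N(z,\eta+\delta_x)$ are preserved. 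Since the edge weights $d(u,v)^\alpha$, the indicators of $\{u\in B_t\}$, and the outdegrees entering $D_xF_t^{(\alpha)}$ and $D_xG_t^{(k)}$ depend only on these outgoing sets, the two difference operators agree. The main delicacy is the rank-shift argument in the first step, which is what confines the perturbation to $Z\cup\{x\}$; once it is in place, the rest reduces to the triangle inequality applied with the factor two hidden in $R(x,\eta)\ge 2R_2(x,\eta)$.
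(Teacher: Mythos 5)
Your first two steps are essentially the paper's own argument, recast vertex-by-vertex: the rank-shift analysis is a clean formalization of the paper's observation that only $x$ itself and points $z$ with $x\in N(z,\eta+\delta_x)$ can be affected, and your bounds $d(x,z)\le R_2(x,\eta)$, $d(z,y)\le R_2(x,\eta)$ together with the triangle inequality (the factor $2$ in $R\ge 2R_2$) are exactly how the paper confines all new and deleted edges to $B(x,R(x,\eta))$. Up to that point the proposal is correct and matches the paper, which in fact stops there: the paper concludes the displayed identities directly from the statement that ``all changes happen within $B(x,R(x,\eta))$''.

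Your third step, where you try to actually verify the identity $D_xF_t^{(\alpha)}(\eta)=D_xF_t^{(\alpha)}(\eta\cap B(x,R(x,\eta)))$, is however incomplete as written. You show that for every $z\in Z\cup\{x\}$ the sets $N(z,\cdot)$ are the same whether computed in $\eta$, $\eta+\delta_x$ or in their restrictions to $B(x,R(x,\eta))$; this gives one inclusion, namely that every change occurring in the full configuration is reproduced in the restricted one. What is missing is the converse: you must rule out \emph{additional} changes in the restricted configuration, i.e.\ points $z\in\eta\cap B(x,R(x,\eta))$ with $x\in N(z,(\eta\cap B(x,R(x,\eta)))+\delta_x)$ but $x\notin N(z,\eta+\delta_x)$. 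Your rank-shift argument was carried out in $\eta$, not in the restricted (finite) configuration, and it does not transfer: a point $z$ near the boundary of $B(x,R(x,\eta))$ may have all of $N(z,\eta)$ strictly closer than $x$ (so $z\notin Z$ and $z$ contributes nothing to $R_2$), while some of these neighbours lie outside $B(x,R(x,\eta))$; after their removal the convex hull around $z$ need no longer close before $x$'s rank, so $z$ can pick up the edge to $x$ in the restricted configuration only. Your final sentence (``the two difference operators agree'') silently assumes this cannot happen, so the step asserting the exact identity does not follow from what you proved. To be fair, the paper's own proof establishes only the localization of the changes of $\mathrm{NNE}(\eta)$ upon insertion of $x$ and asserts the identity in the same breath, and it is this localization property that is actually used later; but since you explicitly set out to prove the equality for the restriction, you should either close the missing inclusion or state and prove the weaker localization property that the rest of the paper relies on.
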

\begin{proof}
    If $x$ is added to the configuration, any new edge must have $x$ as one of its endpoints. Hence, there are two possibilities for new edges:
    \begin{itemize}
        \item edges from $x$ to one of its nearest neighbours, whose lengths are smaller than $R_1(x,\eta)$ by definition,
        \item edges from $x$ to some $z \in \eta$ such that $x \in N(z,\eta+\delta_x)$. In particular, $d(x,z) \le \sup_{y \in N(z,\eta)} d(z,y)$ and hence, $z \in B(x,R_2(x,\eta))$.
    \end{itemize}
    On the other hand, edges which are deleted upon the addition of $x$ are edges from $z \in \eta$ to $y \in N(z,\eta)$ such that $x \in N(z,\eta + \delta_x)$ and $\conv(x, N(z,\eta) \setminus \{y\}) \subset \conv(N(z,\eta))$, i.\,e., $x$ completes the convex hull around $z$ before $y$ can get added to $N(z,\eta+\delta_x)$. This implies necessarily that $d(x,z) \le \sup_{w \in N(z,\eta)} d(w,z)$ and also $d(x,y) \le 2 \sup_{w \in N(z,\eta)} d(w,z)$, so that $z,y \in B(x,2R_2(x,\eta))$. Hence, all changes to the edges or node degree of the graph (and, by extension, to $F_t^{(\alpha)}$ or $G_t^{(k)}$) caused by insertion of the point $x$ happen within $B(x,R(x,\eta))$.
\end{proof}

To derive properties of the radius of stabilization $R(x,\eta)$ of Lemma \ref{lem:DOloc} we start with the following technical result. Recall that we denote by $S^{d-1}(z)\subset T_z\XX^d$ the unit sphere in the tangent space at $z$. For $v\in S^{d-1}(z)$ we let $H_v(z)$ be the halfspace in $T_z\XX^d$ containing $v$, whose bounding hyperplane passes through $z$ and which has normal vector $v-z$. Further, for $z\in\XX^d$, $v\in S^{d-1}(z)$ and $r>0$ we put
\begin{equation}\label{eq:S(z,v,r)}
S(z,v,r) := B(z,r) \cap \exp_z(H_v(z)),
\end{equation}
where we recall that $\exp_z:T_z\XX^d\to\XX^d$ stands for the exponential map. We remark that in the Euclidean case $\XX^d=\RR^d$ the set $S(z,v,r)$ is just the half-sphere centred at $z$ with radius $r$ pointing in the direction of $v$. Moreover, in order to treat the Euclidean and hyperbolic cases in parallel, we introduce the function $\rho:(0,\infty) \rightarrow (0,\infty)$, defined by
\[
\rho(r) := 
\begin{cases}
    \kappa_d r^d & \text{in the Euclidean case}\\
    \gamma_d e^{r(d-1)}\ind{r\geq2} & \text{in the hyperbolic case.}
\end{cases}
\]
Here, {we recall that
$\kappa_d$ denotes} the volume of the $d$-dimensional Euclidean unit ball, and $\gamma_d$ is the constant from \eqref{eq:hypvol}. In other words, $\rho(r)$ equals the volume of a ball of radius $r$ in the Euclidean case, while in the hyperbolic case, it is a lower bound on the volume of a ball of radius $r$ which {as $r\to\infty$} is of the same order as $\vol(B(p,r))$ by \eqref{eq:hypvol}.

\begin{lemma}\label{lem:exptails}
    For any $r > 0$ and $z \in \XX^d$, there exist constants $c_1, c_2 \in (0,\infty)$ depending on $d$ only such that
    \[
    \PP(\exists\, v \in S^{d-1}(z) \text{ such that } \eta(S(z,v,r)) = 0) \le c_1 \exp(-c_2 \rho(r)).
    \]
\end{lemma}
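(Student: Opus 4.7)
Since the event in question is an uncountable union over the sphere of directions $S^{d-1}(z)$, a direct union bound is impossible. The plan is to approximate the family $\{S(z,v,r):v\in S^{d-1}(z)\}$ by a finite, dimension-dependent collection of ``wedges'', each contained in many of the half-balls at once, and then invoke the Poisson void probability formula. Concretely, fix once and for all a universal constant $\varepsilon\in(0,\pi/2)$, for definiteness $\varepsilon=\pi/3$, and choose an $\varepsilon$-net $v_1,\ldots,v_N\in S^{d-1}(z)$ in the angular metric (so that every $v\in S^{d-1}(z)$ satisfies $\angle(v,v_i)\le\varepsilon$ for at least one $i$); the cardinality $N=N(d)$ depends on $d$ only.

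For each index $i$ set
\[
C_i:=\{w\in T_z\XX^d:\angle(w,v_i)\le \pi/2-\varepsilon\},\qquad W_i:=B(z,r)\cap \exp_z(C_i).
\]
Using $\langle w,v\rangle=\|w\|\cos\angle(w,v)$ one checks that $C_i=\bigcap_{v:\angle(v,v_i)\le\varepsilon}H_v(z)$, and since $\exp_z$ is a diffeomorphism this gives
\[
W_i=\bigcap_{v\in S^{d-1}(z),\,\angle(v,v_i)\le\varepsilon}S(z,v,r).
\]
The upshot is the following key inclusion: if $\eta(S(z,v,r))=0$ for some $v$, then choosing $v_i$ with $\angle(v,v_i)\le\varepsilon$ yields $W_i\subset S(z,v,r)$ and therefore $\eta(W_i)=0$.

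The next step is to bound $\vol(W_i)$ from below by a constant multiple of $\rho(r)$. Applying the polar integration formula \eqref{eq:PolIntHypGeom} at $z$ and using that $C_i$ is a cone (so $uv\in C_i\iff v\in C_i$ for $u>0$) gives, in the hyperbolic case,
\[
\vol(W_i)=d\kappa_d\,\sigma_z(C_i\cap S^{d-1}(z))\int_0^r\sinh^{d-1}(u)\,du=\sigma_z(C_i\cap S^{d-1}(z))\cdot \vol(B_r),
\]
and the Euclidean analogue, with $u^{d-1}$ in place of $\sinh^{d-1}(u)$, yields the same identity. Since $C_i\cap S^{d-1}(z)$ is a spherical cap of half-angle $\pi/2-\varepsilon>0$, the factor $\sigma_z(C_i\cap S^{d-1}(z))$ is a strictly positive constant $c_0=c_0(d)$. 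Combined with $\vol(B_r)\ge\rho(r)$ (which is an equality in the Euclidean case, a consequence of \eqref{eq:hypvol} in the hyperbolic case for $r\ge 2$, and trivial when $\rho(r)=0$), this gives $\vol(W_i)\ge c_0\,\rho(r)$.

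A Poisson void probability computation together with a union bound over the $N$ net directions then yields
\[
\PP\bigl(\exists v\in S^{d-1}(z):\eta(S(z,v,r))=0\bigr)\le \sum_{i=1}^N\PP(\eta(W_i)=0)=\sum_{i=1}^N e^{-\vol(W_i)}\le N\,e^{-c_0\rho(r)},
\]
which is the desired estimate with $c_1=N$ and $c_2=c_0$. The only mildly delicate ingredient is the hyperbolic volume computation: once polar integration and the identification of $\bigcap_v H_v(z)$ as a solid circular cone around $v_i$ are in hand, the remainder is a routine compactness plus union bound argument.
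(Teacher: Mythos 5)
Your proof is correct and takes essentially the same route as the paper: both arguments replace the uncountable family of half-balls by finitely many cone--ball sectors whose angular radius guarantees containment in every $S(z,v,r)$ with $v$ in the corresponding direction patch, and then combine the Poisson void probability with a union bound, the sector volume being a fixed positive fraction of $\vol(B_r)\ge\rho(r)$. The only cosmetic difference is that you make the volume lower bound explicit via the polar integration formula, where the paper appeals to congruence of the cones under isometries.
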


\begin{proof}
    By stationarity of $\eta$ we may assume that {$z$ equals the origin $p$.}
    We cover the tangent space $T_p\XX^d$, which we can identify with $\RR^d$ in both cases, with cones $C_1,\ldots,C_K$ centered at $p$ having angular radius $\pi/4$. Now, let $r>0$ and $v \in S^{d-1}(p)$ be such that $\eta(S(p,v,r))=0$.
    Then $v \in C_i \subset H_v(p)$ for some $i \in \{1,\ldots,K\}$ and thus $D_i:=\exp_p(C_i) \subset \exp_p(H_v(p))$. Hence, $D_i \cap B(p,r) \subset S(p,v,r)$, so that $\eta(D_i \cap B(p,r)) = 0$. Application of the union bound leads to
    \begin{align*}
        \PP(\exists\, v \in S^{d-1}(p) \text{ such that } \eta(S(p,v,r))=0) & \leq \sum_{i=1}^K\PP(\eta(D_i\cap B(p,r))=0)\\
        &= \sum_{i=1}^K \exp(-\vol(D_i \cap B(p,r))).
    \end{align*}
    Since each set of the form $D_i$, with $i\in\{1,\ldots,K\}$, can be transformed into $D_j$ for each $j\in\{1,\ldots,K\}$ using an isometry of the space $\XX^d$, $\vol(D_1 \cap B(p,r))=\ldots=\vol(D_K \cap B(p,r))=A\vol(B(p,r))$ for some constant $A>0$. Using the definition of the function $\rho(r)$ and putting $c_1:=K$ and $c_2:=A$, this leads to the desired result.
\end{proof}

The previous result allows us to derive tail estimates for the radius of stabilization. They will turn out to be of crucial importance in later parts of the proof of Theorem \ref{thm:main}.

\begin{lemma}\label{lem:exptails3}
    For $m \in \NN_0$, let $x,y_1,\hdots,y_m\in \XX^d$ be distinct points. Then there are constants $c_0,C_0>0$ depending on the choice of $\XX^d$ only such that
    \[
    \PP(R(x,\eta + \delta_{y_1} + \hdots + \delta_{y_m})>s) \leq (m+1) C_0 \exp(-c_0\rho(s/2)),\qquad s>0.
    \]
\end{lemma}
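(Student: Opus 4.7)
The strategy is to decompose $\{R(x,\eta')>s\}$, with $\eta':=\eta+\delta_{y_1}+\ldots+\delta_{y_m}$, into events that each reduce, via a separating-hyperplane argument, to the existence of an empty half-ball around some anchor point; Lemma~\ref{lem:exptails} then supplies the exponential tail. Since $R=\max\{R_1,2R_2\}$, I start by bounding $\PP(R(x,\eta')>s)\leq \PP(R_1(x,\eta')>s)+\PP(R_2(x,\eta')>s/2)$ and treat the two summands separately.

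For the first summand, if $R_1(x,\eta')>s$ then $x\notin\conv(\eta'\cap B(x,s))$, so passing to $T_x\XX^d$ via $\exp_x^{-1}$ and applying a separating hyperplane through $x$ yields some $v\in S^{d-1}(x)$ with $\eta'(S(x,v,s))=0$, and hence $\eta(S(x,v,s))=0$. Lemma~\ref{lem:exptails} together with the monotonicity of $\rho$ gives the bound $c_1\exp(-c_2\rho(s/2))$.

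For the second summand, unpacking the definition of $R_2$ shows that $\{R_2(x,\eta')>s/2\}$ is the event that there exists $z\in\eta'$ with $x\in N(z,\eta'+\delta_x)$ and $R_1(z,\eta')>s/2$. I split according to whether $z$ is one of the deterministic atoms $y_1,\ldots,y_m$ or a point of $\eta$. If $z=y_j$, I drop the constraint on $x$ and apply the half-ball argument at $y_j$ to bound $\PP(R_1(y_j,\eta')>s/2)\leq c_1\exp(-c_2\rho(s/2))$; a union bound over $j$ produces the factor $m$ appearing in the statement. If $z\in\eta$, I invoke the Mecke equation to rewrite the corresponding probability as
\[
\int_{\XX^d}\PP\bigl(x\in N(z,\eta'+\delta_z+\delta_x),\,R_1(z,\eta'+\delta_z)>s/2\bigr)\,\vol(dz),
\]
and split the domain of integration at $\partial B(x,s/2)$. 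On $B(x,s/2)$, the condition $R_1(z,\eta'+\delta_z)>s/2$ alone yields an empty half-ball of radius $s/2$ at $z$, so the integrand is bounded by $c_1\exp(-c_2\rho(s/2))$ and the volume prefactor $\vol(B(x,s/2))$ can be absorbed into the exponential using \eqref{eq:hypvol}. On the complement, the condition $x\in N(z,\eta'+\delta_z+\delta_x)$ forces an empty half-ball of radius $d(z,x)$ at $z$ (since points of $\eta'$ strictly closer to $z$ than $x$ cannot already close $z$'s convex hull, else $x$ would not be a new outgoing neighbour), yielding the pointwise bound $c_1\exp(-c_2\rho(d(z,x)))$.

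Adding the three contributions gives the desired bound of the form $(m+1)C_0\exp(-c_0\rho(s/2))$. I expect the main technical step to be the tail integral
\[
\int_{\XX^d\setminus B(x,s/2)}\exp\bigl(-c_2\rho(d(z,x))\bigr)\,\vol(dz),
\]
which I would evaluate by polar coordinates around $x$---using \eqref{eq:PolIntHypGeom} in the hyperbolic case and the usual polar decomposition in the Euclidean case---combined with the substitution $u=\rho(r)$, reducing the integral to a single exponential of order $\exp(-c\rho(s/2))$. In the hyperbolic case the bookkeeping requires balancing the exponentially growing volume element $\sinh^{d-1}(r)\sim e^{(d-1)r}$ against the double-exponential decay $\exp(-c_2\gamma_d e^{(d-1)r})$; this is the only genuinely delicate calculation, the rest being a union bound and careful tracking of constants.
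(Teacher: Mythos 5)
Your argument is correct and rests on the same two pillars as the paper's proof (the empty half-ball estimate of Lemma~\ref{lem:exptails} and the Mecke equation followed by polar-coordinate tail integrals), but the combinatorial decomposition is genuinely different. The paper first proves a deterministic, pathwise domination: adding a point can only shrink $R_1$, and $R(x,\mu+\delta_y)\leq\max\{R(x,\mu),2R_1(y,\mu)\}$, whence by induction $R(x,\mu+\delta_{y_1}+\cdots+\delta_{y_m})\leq\max\{R_1(x,\mu),2R_2(x,\mu),2R_1(y_1,\mu),\ldots,2R_1(y_m,\mu)\}$; this reduces everything to the case $m=0$, where $\PP(R_2(x,\eta)>s)$ is handled by a \emph{double} Mecke integral over pairs $(z,y)\in\eta^2_{\neq}$ with the indicator $\ind{d(z,y)>s}$, Cauchy--Schwarz to decouple the two neighbourhood events, and then the Euclidean/hyperbolic integral computations. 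You instead stay in the augmented configuration $\eta'$, rewrite $\{R_2(x,\eta')>s/2\}$ as the existence of a single point $z$ with $x\in N(z,\eta'+\delta_x)$ and $R_1(z,\eta')>s/2$, split $z$ into the deterministic atoms (giving the factor $m$ directly) versus Poisson points, and for the latter use a \emph{single} Mecke integral with a spatial split at $d(z,x)=s/2$: near $x$ you exploit $R_1(z,\cdot)>s/2$ and absorb the volume prefactor into the exponential, far from $x$ you exploit the half-ball of radius $d(x,z)$ forced by $x\in N(z,\cdot)$ and evaluate the same type of tail integral. This avoids the Cauchy--Schwarz step and the double integral, at the cost of not isolating the deterministic monotonicity facts that make the paper's reduction to $m=0$ transparent; the resulting constants and the $(m+1)$-dependence are the same, and your handling of the hyperbolic bookkeeping (including the small-$s$ regime where $\rho(s/2)=0$) matches the paper's computations.
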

\begin{proof}
    We start with the case $m=0$. Recall the definitions of $R(x,\eta),\ R_1(x,\eta)$ and $R_2(x,\eta)$ from Lemma~\ref{lem:DOloc}. First, we establish estimates for $R_1(x,\eta)$. Note that the event $\{R_1(x,\eta)>s\}$ implies that $x$ is not inside the convex hull of its neighbours within $B(x,s)$, which means there must be {a set of the form \eqref{eq:S(z,v,r)}} which does not contain any points of $\eta$. In other words,
    \begin{align}
        \PP(R_1(x,\eta)>s) &\leq \PP\left(\exists\, v \in S^{d-1}(x) \text{ such that } \eta(S(x,v,s))=0 \right)\notag\\
        &\leq c_1 \exp(-c_2 \rho(s)), \label{eq:P1.4}
    \end{align}
    where the second bound follows from Lemma~\ref{lem:exptails}.
    
    We now deal with $R_2(x,\eta)$. We have
    \begin{align*}
        \PP(R_2(x,\eta)>s) &= \PP\left(\max\{d(z,y) : z\in {\eta}, x\in N(z,{\eta}+\delta_x),y\in N(z,{\eta})\}>s\right)\\
        &\leq \EE \int_{\XX^d}\int_{\XX^d} \ind{d(z,y)>s} \ind{x \in N(z,\eta+\delta_x)} \ind{y \in N(z,\eta)} \,\eta(dz)\eta(dy).
    \end{align*}
    Applying the Mecke equation for Poisson processes \cite[Theorem~4.4]{LP} and the Cauchy--Schwarz inequality, we deduce that
    \begin{multline*}
    \PP(R_2(x,\eta)>s) \\\leq \int_{\XX^d} \int_{\XX^d} \ind{d(z,y)>s} \PP(x\in N(z,\eta + \delta_x + \delta_y))^{1/2} \PP(y \in N(z, \eta+\delta_x+\delta_y))^{1/2}\,\vol(dz)\vol(dy).
    \end{multline*}
    Note that adding points to $\eta$ decreases the probability that $x$ is a neighbour of $z$, therefore
    \[
    \PP(x\in N(z,\eta + \delta_x + \delta_y)) \leq \PP(x\in N(z,\eta + \delta_x)),
    \]
    and likewise for $y$. Moreover, $x$ can only be among the convex neighbours of $z$ if there is a half-ball of radius $d(x,z)$ centred at $z$ which does not contain any points of $\eta$. We thus have
    \begin{align}
        \PP(R_2(x,\eta)>s) &\leq \int_{\XX^d} \int_{\XX^d} \ind{d(z,y)>s} \PP(x\in N(z,\eta + \delta_x))^{1/2} \PP(y \in N(z, \eta+\delta_y))^{1/2} \,\vol(dz)\vol(dy)\notag\\
        &\leq \int_{\XX^d} \int_{\XX^d}\ind{d(z,y)>s} \PP(\exists\, v \in S^{d-1}(z) \text{ such that } \eta(S(z,v,d(x,z)))=0)^{1/2} \notag\\
        &\hspace{1cm}\times\PP(\exists\, w \in S^{d-1}(z) \text{ such that } \eta(S(z,w,d(y,z)))=0)^{1/2} \,\vol(dz)\vol(dy). \label{eq:P1.1}
    \end{align}
    By Lemma~\ref{lem:exptails} we deduce that \eqref{eq:P1.1} is bounded by
    \begin{equation}\label{eq:P1.2}
        \int_{\XX^d} \int_{\XX^d} c_1^2 \ind{d(z,y)>s} \exp(-c_2\rho(d(z,x))) \exp(-c_2\rho(d(z,y))) \,\vol(dz)\vol(dy).
    \end{equation}
    Since distance and volume are invariant under isometries, in both the Euclidean and the hyperbolic case we may assume that the point $x$ coincides with the origin. Then, in the Euclidean case, \eqref{eq:P1.2} is equal to
    \[
    \int_{\RR^d} \int_{\RR^d} c_1^2 \ind{\lVert z-y\rVert>s} \exp(-c_2\kappa_d \lVert z\rVert^d) \exp(-c_2\kappa_d \lVert z-y\rVert^d) \,\vol(dz)\vol( dy).
    \]
    Substituting $y=z+v$, this is 
    \[
    c_1^2 \int_{\RR^d}  \ind{\lVert v\rVert>s}  \exp(-c_2\kappa_d \lVert v\rVert^d) \,\vol(dv) \int_{\RR^d} \exp(-c_2\kappa_d \lVert z\rVert^d)\,\vol(dz) .
    \]
    Transformation into spherical coordinates and the subsequent  change of variables $c_2\kappa_d r^d=u$ yields that this is the same as
    \begin{align*}
        &c_1^2(d\kappa_d)^2\int_s^\infty e^{-c_2\kappa_d r^d} {r^{d-1}}\,dr\,\int_0^\infty e^{-c_2\kappa_d r^d} {r^{d-1}}\,dr=\frac{c_1^2}{c_2^2}\int_{c_2\kappa_ds^d}^\infty e^{-u}\,du\,\int_0^\infty e^{-u}\,du=\frac{c_1^2}{c_2^2}e^{-c_2\kappa_ds^d},
    \end{align*}
    and we conclude that
    \begin{equation}\label{eq:P1.5}
        \PP(R_2(x,\eta)>s) \leq \frac{c_1^2}{c_2^2} \exp(-c_2 \kappa_d s^d) = \frac{c_1^2}{c_2^2} \exp(-c_2 \rho(s)).
    \end{equation}

    In the hyperbolic case, the right hand side of \eqref{eq:P1.2} is equal to
    \[
    \int_{\HH^d} \int_{\HH^d} c_1^2 \ind{d(y,z)>s} \exp\left(-c_2(\rho(d(p,z))\right) \exp(-c_2\rho(d(y,z)))\,\vol( dz)\vol( dy).
    \]
    Writing again $\exp_z:T_z\mathbb{H}^d\to\mathbb{H}^d$ for the exponential map at $z$ and substituting $y=\exp_z(ru)$ with $r>0$ and $u\in S^{d-1}(z)\subset T_z\HH^d$. Then the last expression can be rewritten as
    \begin{align*}
d\kappa_d\int_{\mathbb{H}^d}\int_{S^{d-1}(z)}\int_s^\infty c_1^2 (\sinh r)^{d-1} \exp(-c_2\rho(d(p,z))) \exp(-c_2\rho(r))\,dr\sigma_z(du)\vol(dz),
    \end{align*}
    where $\sigma_z$ is the normalized spherical Lebesgue measure on $S^{d-1}(z)$.
    Next, we notice that this expression does not depend on the direction $u$. Together with the introduction of hyperbolic spherical coordinates as in \eqref{eq:PolIntHypGeom}, this allows us to reduce the previous expression to 
    \begin{align*}
        c_1^2(d\kappa_d)^2\int_0^\infty (\sinh r)^{d-1}e^{-c_2\rho(r)}\,dr\int_{s}^\infty (\sinh r)^{d-1}e^{-c_2\rho(r)}\,dr.
    \end{align*}
    To derive an upper bound for the last expression we use that $\sinh x=(e^x-e^{-x})/2\leq e^{x}/2\leq e^x$ for $x>0$. Then, using the substitution $u=c_2\gamma_de^{r(d-1)}$ and assuming that $s\geq2$, we arrive at
    \begin{align*}
        &c_1^2(d\kappa_d)^2\Big(\int_0^2 e^{r(d-1)}\,dr+\int_2^\infty e^{r(d-1)}e^{-c_2\gamma_d e^{r(d-1)}}\,dr\Big)\Big(\int_s^\infty e^{r(d-1)}e^{-c_2\gamma_d e^{r(d-1)}}\,dr\Big)\\
        &=c_1^2(d\kappa)_d^2\Big(c_3+\frac{1}{(d-1)\gamma_dc_2}\int_{0}^\infty e^{-u}\,du\Big)\Big(\frac{1}{(d-1)c_2\gamma_d}\int_{c_2\gamma_d e^{s(d-1)}}^\infty e^{-u}\,du\Big)\\
        &=A_1 e^{-c_2\gamma_d e^{s(d-1)}}\\
        &=A_1 e^{-c_2\rho(s)},
    \end{align*}
    where $c_3,A_1>0$ are constants only depending on $\HH^d$. Altogether we derive the bound 
    \begin{equation}\label{eq:P1.6}
    \PP(R_2(x,\eta)>s) \leq A_1 e^{-c_2\rho(s)}
    \end{equation}
    in the case that $s \geq 2$. Note that if $0<s<2$, the bound continues to hold trivially, provided that we choose $A_1\geq1$ {large enough}. Summarizing, this shows that the statement of the lemma holds in the case $m=0$ in both the Euclidean and the hyperbolic settings.
    
    Let us now study $R(x,\mu+\delta_y)$, where $x,y \in \XX^d$ are distinct points and $\mu \subset \XX^d$ is a locally finite collection of points. We have
    \[
    R(x,\mu+\delta_y) = \max\{R_1(x,\mu+\delta_y), 2 R_2(x,\mu+\delta_y)\}.
    \]
    Note that the addition of a point $y$ can only make the convex hull of neighbours of $x$ shrink, hence
    \[
    R_1(x,\mu+\delta_y) = \max\{d(x,z) : z \in N(x,\mu+\delta_y)\} \leq \max\{d(x,z) : z \in N(x,\mu)\} = R_1(x,\mu).
    \]
    As for $R_2(x,\mu+\delta_y)$, we have
    \[
    R_2(x,\mu+\delta_y) = \max\{d(z,w) : z \in \mu+\delta_y, x \in N(z,\mu+\delta_x+\delta_y), w\in N(z,\mu+\delta_y)\}.
    \]
    We first treat the case $z=y$. If $x \in N(y,\mu+\delta_x+\delta_y)$, then the maximum reduces to
    \[
    \max\{d(y,w) : w \in N(y,\mu + \delta_y)\} \leq R_1(y,\mu).
    \]
    We are left to find a bound for
    \begin{equation}\label{eq:P1.3}
        \max\{d(z,w) : z \in \mu, x \in N(z,\mu+\delta_x+\delta_y), w\in N(z,\mu+\delta_y)\}.
    \end{equation}
    If $y \in N(z,\mu + \delta_y)$, then necessarily $d(y,z) \leq \max\{d(z,w):w \in N(z,\mu)\}$, and hence \eqref{eq:P1.3} is bounded by
    \begin{equation}\label{eq:P1.7}
        \max\{d(z,w) : z \in \mu, x \in N(z,\mu+\delta_x+\delta_y), w\in N(z,\mu)\}.
    \end{equation}
    Lastly, we observe that $x \in N(z,\mu+\delta_x+\delta_y)$ implies that $x \in N(z,\mu+\delta_x)$, since adding points can only shrink the convex hull. Thus \eqref{eq:P1.7} is smaller than
    \[
    \max\{d(z,w) : z \in \mu, x \in N(z,\mu+\delta_x), w\in N(z,\mu)\} = R_2(x,\mu).
    \]
    Combining the above estimates, we get
    \[
    R(x,\mu+\delta_y) \leq \max\{R_1(x,\mu), 2\max\{R_1(y,\mu) , R_2(x,\mu)\}) = \max\{R(x,\mu) , {2} R_1(y,\mu)\}.
    \]
    By induction, it now follows that
    \[
    R(x,\mu+\delta_{y_1} + \hdots + \delta_{y_m}) \leq \max\{R_1(x,\mu), 2R_2(x,\mu) , 2R_1(y_1,\mu) , \hdots , 2R_1(y_m,\mu)\}.
    \]
    Hence,
    \begin{multline*}
        \PP(R(x,\eta+\delta_{y_1} + \hdots + \delta_{y_m})>s) \\\leq \PP(R_1(x,\eta) > s) + \PP(R_2(x,\eta)>s/2) + \PP(R_1(y_1,\eta)>s/2) + \hdots + \PP(R_1(y_m,\eta)>s/2).
    \end{multline*}
    Using \eqref{eq:P1.4}, \eqref{eq:P1.5} and \eqref{eq:P1.6}, we deduce that 
    \[
    \PP(R(x,\eta+\delta_{y_1} + \hdots + \delta_{y_m})>s) \leq C_0 (m+1) \exp\left( - c_0 \rho(s/2)\right),
    \]
    for well-chosen constants $C_0,c_0>0$ depending on the choice of $\XX^d$ only.
\end{proof}

In the next lemma, we prove an upper bound on moments of the number of points of $\eta$ inside a ball of radius $R(x,\eta)$.

\begin{lemma}\label{lem:etamom}
    Let $\cA \subset \XX^d$ be a finite set of points and let $x \in \XX^d$. For any $m \in \NN$, there is a constant $c>0$ only depending on $m$, $\cA$ and the choice of $\XX^d$ such that
    \[
    \EE \eta(B(x,R(x,\eta \cup \cA))^m \leq c.
    \]
\end{lemma}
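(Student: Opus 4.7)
The plan is to combine the exponential tail bound from Lemma~\ref{lem:exptails3} with standard Poisson moment estimates via a layer-cake decomposition. Write $R := R(x,\eta\cup\cA)$ and $n := |\cA|$. Enumerating $\cA = \{a_1,\ldots,a_n\}$ and applying Lemma~\ref{lem:exptails3} with $(y_1,\ldots,y_n) = (a_1,\ldots,a_n)$ yields the tail bound
\[
\PP(R > s) \;\leq\; (n+1)C_0\exp(-c_0\rho(s/2)),\qquad s>0,
\]
with $c_0,C_0>0$ depending only on $\XX^d$.

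The key step is to note that on the event $\{k\leq R < k+1\}$ for $k\in\NN_0$, the monotonicity of $\eta(B(x,\cdot\,))$ yields $\eta(B(x,R))\leq \eta(B(x,k+1))$. Hence
\[
\EE \eta(B(x,R))^m \;\leq\; \sum_{k=0}^\infty \EE\bigl[\eta(B(x,k+1))^m\,\ind{R\geq k}\bigr],
\]
and by the Cauchy--Schwarz inequality,
\[
\EE \eta(B(x,R))^m \;\leq\; \sum_{k=0}^\infty \bigl(\EE \eta(B(x,k+1))^{2m}\bigr)^{1/2}\PP(R\geq k)^{1/2}.
\]
Since $\eta(B(x,k+1))$ is Poisson with mean $\vol(B(x,k+1))$, standard identities for Poisson moments give $\EE\eta(B(x,k+1))^{2m}\leq C_m(1+\vol(B(x,k+1))^{2m})$ for some constant $C_m>0$ depending only on $m$.

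To conclude, I would check convergence of the resulting series in the two geometries separately. In the Euclidean case, $\vol(B(x,k+1))^{2m}$ grows polynomially in $k$, while by Lemma~\ref{lem:exptails3} we have $\PP(R\geq k)^{1/2}\leq C\exp(-\tfrac{c_0}{2}\kappa_d(k/2)^d)$, which decays faster than any polynomial. In the hyperbolic case, $\vol(B(x,k+1))^{2m}\leq C\exp(2m(k+1)(d-1))$ by \eqref{eq:hypvol}, whereas $\PP(R\geq k)^{1/2}\leq C\exp(-\tfrac{c_0}{2}\gamma_d e^{(k/2)(d-1)})$ for $k\geq 4$, which is a double exponential in $k$ and thus dominates the single-exponential moment factor. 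In both cases the series converges to a finite value depending only on $m$, $n=|\cA|$ and $\XX^d$.

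The principal obstacle lies in the hyperbolic regime: there, both the volume growth and the function $\rho$ inside the tail estimate are exponential in $s$, so the argument really rests on the fact that $\rho(s/2)$ appears inside an outer exponential in the tail bound, producing the doubly-exponential decay needed to overwhelm $\vol(B(x,k+1))^{2m}$. Apart from this point, the rest of the argument is a routine integration of a tail bound against a moment bound, and the final dependence of $c$ on $\cA$ enters only through its cardinality $n$ via the prefactor $(n+1)$ in Lemma~\ref{lem:exptails3}.
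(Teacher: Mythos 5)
Your proof is correct, but it follows a genuinely different route from the paper. The paper expands $\EE\,\eta(B(x,R(x,\eta\cup\cA)))^m$ into factorial moments over $\ell$-tuples of distinct Poisson points (with partition numbers $p_\ell(m)$), applies the multivariate Mecke equation to each term, separates the resulting product of indicators by H\"older's inequality, and only then invokes Lemma~\ref{lem:exptails3} --- crucially, for the radius $R(x,\eta+\delta_{y_1}+\cdots+\delta_{y_\ell}\cup\cA)$ of the process augmented by the $\ell$ Mecke points --- before integrating the exponential tail over $(\XX^d)^\ell$. You avoid the Mecke formula entirely: your layer-cake decomposition over the value of $R(x,\eta\cup\cA)$, combined with Cauchy--Schwarz to decouple the (dependent) count $\eta(B(x,k+1))$ from the event $\{R\geq k\}$, reduces everything to standard Poisson moment bounds plus the tail estimate of Lemma~\ref{lem:exptails3} applied only with the deterministic points of $\cA$ added. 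This is more elementary and makes the role of the doubly-exponential tail in the hyperbolic case (beating the singly-exponential volume growth) very transparent; the paper's Mecke-based argument, in turn, stays within the calculus already used in Lemma~\ref{lem:exptails3} and yields an integral rather than a series at the final step, but the work is essentially equivalent. Two small points you should make explicit in a full write-up: the terms $k=0,1,2,3$ in the hyperbolic case, where $\rho(k/2)=0$ renders the tail bound trivial, are handled by the Poisson moment factor alone; and the application of Lemma~\ref{lem:exptails3} tacitly assumes $x\notin\cA$ (as does the paper), which is harmless but worth a remark. Your final constant depends on $\cA$ only through $|\cA|$, which is consistent with, and slightly sharper than, the statement.
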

\begin{proof}
    For any natural number $m\in \NN$, we have
    \begin{align*}
        &\EE \eta(B(x,R(x,\eta \cup \cA)))^m\\
        =\,& \EE \Big(\int_{\XX^d} \ind{d(x,y) \le R(x,\eta \cup \cA)} \,\eta(dy)\Big)^m\\
        = \, &\sum_{\ell=1}^m p_\ell(m) \EE \sum_{(y_1,\ldots,y_\ell) \in \eta_{\ne}^\ell} \ind{d(x,y_1) \le R(x,\eta \cup \cA)} \cdot \ldots \cdot \ind{d(x,y_\ell) \le R(x,\eta \cup \cA)}\\
        = \, &\sum_{\ell=1}^m p_\ell(m) \int_{(\XX^d)^\ell} \EE \Big[\ind{d(x,y_1) \le R(x,\eta+\delta_{y_1}+ \ldots + \delta_{y_\ell} \cup \cA)}\\
        &\hspace{2cm}\cdot \ldots \cdot \ind{d(x,y_\ell) \le R(x,\eta+\delta_{y_1}+ \ldots + \delta_{y_\ell} \cup \cA)}\Big]\, \vol(dy_1)\ldots \vol(dy_m)\\
        \le \, &\sum_{\ell=1}^m p_\ell(m) \int_{(\XX^d)^\ell} \PP(d(x,y_1) \le R(x,\eta+\delta_{y_1}+ \ldots + \delta_{y_\ell} \cup \cA))^{1/\ell}\\
        &\hspace{2cm}\cdot \ldots \cdot \PP(d(x,y_\ell) \le R(x,\eta+\delta_{y_1}+ \ldots + \delta_{y_\ell} \cup \cA))^{1/\ell}\, \vol(dy_1) \ldots \vol(dy_\ell),
    \end{align*}
    where $p_\ell(m)$ is the number of possibilities to partition $\{1,\ldots,m\}$ into $\ell$ non-empty subsets, see \cite[Section 5.2]{Bona}, and $\eta_{\neq}^\ell$ denotes the set of $\ell$-tuples of distinct points of $\eta$. We have used the Mecke equation for Poisson processes of \cite[Theorem~4.4]{LP} to achieve the third line and Hölder's inequality for the last line. By Lemma~\ref{lem:exptails3}, we derive that
    \[
    \EE \eta(B(x,R(x,\eta \cup \cA)))^m \leq \sum_{\ell=1}^m p_\ell(m) \left( \int_{\XX^d} C_0 (\ell + |\cA| + 1) \exp\left(-\frac{c_0}{\ell}\rho(d(x,y))\right) \,\vol(dy)\right)^\ell \leq c
    \]
    for a suitable constant $c>0$ only depending on $m$, $\cA$ and the choice of $\XX^d$, using similar arguments as in the proof of Lemma~\ref{lem:exptails3}.
\end{proof}

\section{Proofs of the main results}\label{sec:Proofs}

\subsection{Proof of Theorem \ref{thm:main}}

The proof of Theorem~\ref{thm:main} requires estimates for the add-one costs as well as a lower variance bound. We start with the add-one cost bounds and introduce for $x\in\XX^d$ and a set $A\subset\XX^d$ the notation $d(x,A):=\inf\{d(x,a):a\in A\}$ for the distance of $x$ to $A$. Moreover, we shall write $\partial B_r=\{x\in\XX^d:d(x,p)=r\}$ for the sphere in $\XX^d$ of radius $r>0$ centred at $p$. 

\begin{lemma}\label{lem:DxBounds}
    Let $\alpha\geq 0$ and $\beta>0$. Then there is a constant $\tilde{C}_1>0$ depending on $\beta,\alpha$ and the choice of $\XX^d$ such that for all $x,y \in \XX^d$ and $t\geq 2$,
    \begin{equation}\label{eq:DxBounds}
        \EE \left| D_x F_t^{(\alpha)} \right|^\beta \leq \tilde{C}_1 \quad \text{and} \quad \EE \left| D_{x,y} F_t^{(\alpha)} \right|^\beta \leq \tilde{C}_1.
    \end{equation}
    Moreover, there are constants $\tilde{c}_2,\tilde{C}_2>0$ depending on the choice of $\XX^d$ only such that for all $x,y \in \XX^d$ and $t\geq 2$,
    \[
    \PP\left( D_{x,y} F_t^{(\alpha)} \neq 0 \right) \leq \tilde{C}_2 \exp(-\tilde{c}_2 \rho(d(x,y)/2)),
    \]
    and, if $x \notin B_t$,
    \[
    \PP\left( D_x F_t^{(\alpha)} \neq 0 \right) \leq \tilde{C}_2 \exp(-\tilde{c}_2 \rho(d(x,\partial B_t)/2))
    \]
    and
    \[
    \PP\left( D_{x,y} F_t^{(\alpha)} \neq 0 \right) \leq \tilde{C}_2 \exp(-\tilde{c}_2 \rho(d(x,y)))\exp(-\tilde{c}_2 \rho(d(x,\partial B_t))).
    \]
\end{lemma}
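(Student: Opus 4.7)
My plan is to deduce everything from Lemma~\ref{lem:DOloc}, Lemma~\ref{lem:exptails3}, and Lemma~\ref{lem:etamom}. For the two moment bounds, Lemma~\ref{lem:DOloc} confines every edge modified by the insertion of $x$ to the ball $B(x,R(x,\eta))$; counting such edges by pairs of affected points and bounding each individual edge length by $2R(x,\eta)$ gives the pointwise estimate
\[
|D_x F_t^{(\alpha)}| \leq C\, R(x,\eta)^\alpha\, \bigl(1 + \eta(B(x,R(x,\eta)))\bigr)^2
\]
for a universal constant $C = C(\alpha)$. Raising to the $\beta$-th power and applying Cauchy--Schwarz reduces $\EE|D_x F_t^{(\alpha)}|^\beta$ to products of moments of $R(x,\eta)$ and of $\eta(B(x,R(x,\eta)))$, both of which are uniformly finite by Lemma~\ref{lem:exptails3} (with $m=0$, whose stretched-exponential tail makes all moments of $R(x,\eta)$ finite) and Lemma~\ref{lem:etamom} (with $\cA=\emptyset$). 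For the mixed second-order bound I write $D_{x,y}F_t^{(\alpha)} = D_xF_t^{(\alpha)}(\eta+\delta_y) - D_xF_t^{(\alpha)}(\eta)$, apply the triangle inequality, and repeat the same argument for each summand, now with $m=1$ in Lemma~\ref{lem:exptails3} and $\cA=\{y\}$ in Lemma~\ref{lem:etamom}.

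\textbf{Single-tail probability bounds.} The first probability estimate uses that $D_xF_t^{(\alpha)}(\mu)$ is a function of $\mu\cap B(x,R(x,\mu))$ by Lemma~\ref{lem:DOloc}. Hence if $y$ lies outside both balls $B(x,R(x,\eta))$ and $B(x,R(x,\eta+\delta_y))$, the two add-one costs are each determined by $\eta$ restricted to a ball not containing $y$, and a short argument (analogous to those used inside the proof of Lemma~\ref{lem:exptails3}, exploiting that insertions outside the current stabilization ball do not enlarge the stabilization radius) shows they coincide. Consequently $\{D_{x,y}F_t^{(\alpha)}\neq 0\} \subseteq \{d(x,y) \leq \max(R(x,\eta), R(x,\eta+\delta_y))\}$, and a union bound combined with Lemma~\ref{lem:exptails3} (applied with $m=0$ and $m=1$) finishes the first probability bound. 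For the second, with $x\notin B_t$, I note that $F_t^{(\alpha)}$ only counts edges having at least one endpoint in $B_t$; since every modified edge lies in $B(x,R(x,\eta))$, the event $\{D_xF_t^{(\alpha)}\neq 0\}$ forces $B(x,R(x,\eta))\cap B_t\neq\emptyset$, i.e.\ $R(x,\eta)\geq d(x,\partial B_t)$, and Lemma~\ref{lem:exptails3} with $m=0$ closes the argument.

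\textbf{Joint bound and main obstacle.} The final, joint inequality combines both mechanisms of the previous paragraph, so that $\{D_{x,y}F_t^{(\alpha)}\neq 0\}$ with $x\notin B_t$ forces some stabilization radius around $x$ to exceed both $d(x,y)$ and $d(x,\partial B_t)$ simultaneously. The difficulty is that the claim contains no factor $1/2$ inside $\rho$, so a na\"ive appeal to Lemma~\ref{lem:exptails3} (which comes with such a factor) is too weak. The plan is to reopen the proof of Lemma~\ref{lem:exptails3} and use the sharper intermediate estimates \eqref{eq:P1.4}--\eqref{eq:P1.6}, each of the form $\exp(-c_2\rho(s))$ \emph{without} a factor $1/2$, for the components $R_1(x,\cdot)$ and $R_2(x,\cdot)$ of the stabilization radius separately. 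After splitting $\{D_{x,y}F_t^{(\alpha)}\neq 0\}$ by which of these two radii witnesses both conditions, and using the monotonicity inequality $\rho(\max(a,b))\geq \frac{1}{2}(\rho(a)+\rho(b))$ to convert the joint tail into a product, the claimed estimate follows after rescaling $\tilde{c}_2$. This last step is where I expect the main technical work to lie.
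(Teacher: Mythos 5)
Your moment bounds and your bound for $\PP(D_xF_t^{(\alpha)}\neq 0)$ when $x\notin B_t$ follow the paper's own route (localization via Lemma~\ref{lem:DOloc}, then Lemma~\ref{lem:etamom} and Lemma~\ref{lem:exptails3}) and are fine. The first genuine problem is the principle you invoke for the mixed bound, namely that ``insertions outside the current stabilization ball do not enlarge the stabilization radius'': this is false for the NNE graph. If $y$ is far from $x$ but has a large empty half-ball so that $x$ completes the convex hull around $y$, then $R(x,\eta+\delta_y)\geq 2R_2(x,\eta+\delta_y)$ picks up the distances from $y$ to its own hull-closing neighbours and can exceed $R(x,\eta)$ by an arbitrary amount; this is exactly why the proof of Lemma~\ref{lem:exptails3} carries the extra term $2R_1(y,\mu)$, and why the paper bounds $\PP(D_{x,y}F_t^{(\alpha)}\neq 0)$ through an explicit case analysis of how $R(x,\eta+\delta_y)\neq R(x,\eta)$ can occur (yielding events in terms of $R(x,\eta)$ and $R(y,\eta)$). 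Your inclusion $\{D_{x,y}F_t^{(\alpha)}\neq 0\}\subseteq\{d(x,y)\le\max(R(x,\eta),R(x,\eta+\delta_y))\}$ can in fact be salvaged --- in the problematic scenario one checks $R_2(x,\eta+\delta_y)\geq R_1(y,\eta+\delta_y)\geq d(x,y)$ --- but that verification \emph{is} the case analysis you defer, and note that Lemma~\ref{lem:DOloc} alone does not give that the two add-one costs coincide, since it localizes them to balls of \emph{different} radii; so the combinatorial step cannot be waved away as a ``short argument''.

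For the joint bound your diagnosis of the missing factor $1/2$ is correct, but the proposed repair does not work: the halving is not an artefact of the final step of Lemma~\ref{lem:exptails3}, it is built into the definition $R=\max\{R_1,2R_2\}$ and into the triangle-inequality implication $d(x,y)\le d(x,z)+d(z,y)\le 2R_2(x,\eta)$ in the case where a third point $z$ would connect to both $x$ and $y$. The event you must control in that case is $\{R_2(x,\eta)\ge d(x,y)/2\}$, and \eqref{eq:P1.5}--\eqref{eq:P1.6} then still give $\exp(-c_2\rho(d(x,y)/2))$. In the Euclidean case this is harmless since $\rho(s/2)=2^{-d}\rho(s)$, but in the hyperbolic case $\exp(-c\rho(s/2))$ cannot be absorbed into $\exp(-\tilde c\rho(s))$ for any $\tilde c>0$ (compare $e^{(d-1)s/2}$ with $e^{(d-1)s}$), so your plan cannot deliver the unhalved form there. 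You are in good company: the paper's own argument only establishes $\PP(D_{x,y}F_t^{(\alpha)}\neq 0)\le 3C_0\exp(-\tfrac{c_0}{2}\rho(d(x,y)/2))\exp(-\tfrac{c_0}{2}\rho(d(x,\partial B_t)/2))$, obtained exactly via the inequality $\rho(\max\{a,b\})\ge\tfrac12(\rho(a)+\rho(b))$ you intend to use, and it is this halved version that is used later (cf.\ Lemma~\ref{lem:VarBd}). So prove and use the version with $d(x,y)/2$ and $d(x,\partial B_t)/2$ inside $\rho$ rather than spending effort on the literal unhalved statement.
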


\begin{proof}
    Let us first check \eqref{eq:DxBounds}. If a point $x$ is added, it follows from Lemma \ref{lem:DOloc} that the number of edges which are newly gained is bounded by $\eta(B(x,R(x,\eta)))$. On the other hand, a rough bound on the number of edges which are lost is given by the total number of possible edges within $B(x,R(\eta))$, which is bounded by $\eta(B(x,R(\eta)))^2$. Altogether, this yields
    \[
    |D_x F_t^{(\alpha)}| \le 2\eta(B(x,R(x,\eta)))^2 R(x,\eta)^\alpha,
    \]
    and hence, for any $\beta>0$,
    \[
    \EE|D_x F_t^{(\alpha)}|^\beta \le 2^\beta\EE[\eta(B(x,R(x,\eta)))^{4\beta}]^{1/2}\EE[R(x,\eta)^{2\alpha \beta}]^{1/2}.
    \]
    We have that for $m = \lceil 4\beta \rceil$,
    \[
    \EE[\eta(B(x,R(x,\eta)))^{4\beta}] \leq \EE[\eta(B(x,R(x,\eta)))^m]^{4\beta/m},
    \]
    which, by Lemma~\ref{lem:etamom}, is bounded by a constant depending on $\beta$ and the choice of $\XX^d$ {only}. Furthermore, the term $\EE[R(x,\eta)^{2\alpha \beta}]$ is bounded as well by a constant depending on $\alpha,\beta$ and the choice of $\XX^d$ by Lemma~\ref{lem:exptails3}.

    By similar arguments as {we used for $D_xF_t^{(\alpha)}$}, we have
    \[
    |D_{x,y} F_t^{(\alpha)}| \le 2\eta(B(x,R(x,\eta)))^2  R(x,\eta)^\alpha  + 2(\eta+\delta_y)(B(x,R(x,\eta+\delta_y)))^2 R(x,\eta+\delta_y)^\alpha,
    \]
    and hence, for any $\beta>0$,
    \begin{align*}
        \EE|D_{x,y} F_t^{(\alpha)}|^\beta \le \, &2^{2\beta} \bigg(\EE[\eta(B(x,R(x,\eta)))^{4\beta}]^{1/2} \cdot \EE[ R(x,\eta)^{2\alpha \beta}]^{1/2} \bigg) \\
        &+2^{2\beta} \bigg( \EE[\big(\eta(B(x,R(x,\eta+\delta_y)))+1\big)^{4\beta}]^{1/2} \cdot \EE[ R(y,\eta+\delta_x)^{2\alpha \beta}]^{1/2} \bigg).
    \end{align*}
    This is again bounded by a constant depending on $\alpha,\beta$ and the choice of $\XX^d$.

    To find a bound on $\PP(D_{x,y} F_t^{(\alpha)} \ne 0)$, recall that as argued in \cite{LPS} (cf.\ the discussion below Proposition 1.4 there, for instance), if $d(x,y) > R(x,\eta)$ and $R(x,\eta+\delta_y) = R(x,\eta)$, we have $D_{x,y} F = 0$ for any Poisson functional $F$ with radius of stabilization $R(x,\eta)$. In view of the definition of $R(x,\eta)$ from Lemma \ref{lem:DOloc}, we see that $R(x,\eta+\delta_y) \ne R(x,\eta)$ is possible if any of the following situations occur:
    \begin{itemize}
        \item $y \in N(x,\eta+\delta_y)$, in which case we must have $d(x,y) \le \sup_{z \in N(x,\eta)} d(x,z) \le R(x,\eta)$,
        \item there exists $z$ such that $x \in N(z, \eta + \delta_x)$, $y \in N(z, \eta+\delta_y)$, which implies that $d(y,z) \le \sup_{w \in N(z,\eta)} d(z,w)$ and hence $d(x,y) \le d(x,z) + d(y,z) \le R(x,\eta)$,
        \item $x \in N(y,\eta+\delta_x)$, in which case $d(x,y) \le \sup_{z \in N(y,z)} d(y,z) \le R(y,\eta)$.
    \end{itemize}
    Altogether, this leads to
    \begin{align}
        \PP(D_{x,y} F_t^{(\alpha)} \ne 0) &\le 3\PP(R(x,\eta) \ge d(x,y)) + \PP(R(y,\eta) \ge d(x,y))\notag\\
        &\le 4C_0 \exp(-c_0\rho(d(x,y)/2))
    \end{align}
    using Lemma \ref{lem:exptails3}.

    Now note that if $x \notin B_t$ and $d(x,\partial B_t) > R(x,\eta)$, then any edge which is affected by adding $x$ has both endpoints outside $B_t$, and consequently, $D_x F_t^{(\alpha)} = 0$. This yields
    \[
    \PP(D_x F_t^{(\alpha)} \ne 0) \le \PP(R(x,\eta) \ge d(x, \partial B_t)) \le C_0 \exp(-c_0 \rho(d(x,\partial B_t)/2)),
    \]
    again by Lemma \ref{lem:exptails3}.

    To show the last bound, we need to combine both of the previous arguments. Assume that $x \notin B_t$. First of all, note that, as above, if $d(x,y)>R(x,\eta)$ and $R(x,\eta+\delta_y) = R(x,\eta)$ then $D_{x,y} F_t^{(\alpha)} = 0$. Moreover, if
    \[
    d(x,\partial B_t) > \max \{R(x,\eta+\delta_y),R(x,\eta)\},
    \]
    then both $D_xF_t^{(\alpha)}(\eta)=0$ and $D_xF_t^{(\alpha)}(\eta+\delta_y)=0$ and consequently $D_{x,y} F_t^{(\alpha)} = 0$. Hence, reusing arguments from the beginning of the proof, we deduce that $D_{x,y} F_t^{(\alpha)} \ne 0$ implies $d(x,y)<\max\{R(x,\eta),R(y,\eta)\}$ and $d(x,\partial B_t)<\max\{R(x,\eta),R(x,\eta+\delta_y)\}$. It follows that
    \begin{align*}
        \PP(D_{x,y} F_t^{(\alpha)} \ne 0) &\leq \PP(R(x,\eta)>\max\{d(x,y),d(x,\partial B_t)\}) \\
        &\phantom{\le}+ \PP(R(y,\eta)>\max\{d(x,y),d(x,\partial B_t)\}) \\
        &\phantom{\le}+ \PP(R(x,\eta+\delta_y)>\max\{d(x,y),d(x,\partial B_t)\})\\
        &\le 3 C_0 \exp(-c_0 \rho(\max\{d(x,y),d(x,\partial B_t)\}/2)).
    \end{align*}
    Note that $\rho(\max\{a,b\}) = \max\{\rho(a),\rho(b)\} \ge \frac{1}{2} (\rho(a)+\rho(b))$, since $\rho$ is a non-decreasing function. We thus derive that
    \[
    \PP(D_{x,y} F_t^{(\alpha)} \ne 0) \leq 3 C_0 \exp\left(- \frac{c_0}{2} \rho(d(x,y)/2)\right) \exp\left(- \frac{c_0}{2} \rho(d(x,\partial B_t)/2)\right).
    \]
    This concludes the proof.
\end{proof}

In a next step we derive a lower variance bound for $F_t^{(\alpha)}$ based on Proposition \ref{thm:ST1.1}.

\begin{lemma}\label{lem:VarBd}
    For $t \geq 2$ it holds that
    \[
    \Var F_t^{(\alpha)} \ge c \rho(t),
    \]
    where $c>0$ is a constant depending on $\alpha$ and the choice of $\XX^d$ only.
\end{lemma}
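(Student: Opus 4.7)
The plan is to apply Proposition \ref{thm:ST1.1} with $F = F_t^{(\alpha)}$ and $\lambda = \vol$. The proposition reduces the lower variance bound to two separate estimates with constants independent of $t$:
\begin{align*}
\EE \int_{\XX^d \times \XX^d} (D_{x,y} F_t^{(\alpha)})^2 \vol^2(d(x,y)) &\leq C_1\, \rho(t), \\
\EE \int_{\XX^d} (D_x F_t^{(\alpha)})^2 \vol(dx) &\geq c_2\, \rho(t),
\end{align*}
which together verify condition \eqref{CondST} with $c = C_1/c_2$, whence Proposition \ref{thm:ST1.1} yields the asserted lower bound on $\Var F_t^{(\alpha)}$.

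For the upper bound I would apply Cauchy--Schwarz to write $\EE(D_{x,y}F_t^{(\alpha)})^2 \leq (\EE|D_{x,y}F_t^{(\alpha)}|^4)^{1/2}\, \PP(D_{x,y}F_t^{(\alpha)}\neq 0)^{1/2}$, with the first factor uniformly bounded by Lemma \ref{lem:DxBounds}. I would then split $\XX^d\times\XX^d$ into three parts according to whether $x,y$ belong to $B_t$. If at least one of $x,y$ lies in $B_t$, the basic bound $\PP(D_{x,y}F_t^{(\alpha)}\neq 0)\leq \tilde{C}_2\exp(-\tilde{c}_2\,\rho(d(x,y)/2))$ from Lemma \ref{lem:DxBounds} gives an inner integral that is uniformly bounded, so the outer integral contributes at most $\vol(B_t)\asymp\rho(t)$. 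When both $x,y$ lie outside $B_t$, the refined bound carrying an additional factor $\exp(-\tilde{c}_2\,\rho(d(x,\partial B_t)))$ is used; the $y$-integral is again uniformly bounded, and the remaining $x$-integral over $B_t^c$, parametrised by shells parallel to $\partial B_t$, reduces to a constant multiple of the surface area of $\partial B_t$, which is at most of order $\rho(t)$ in both geometries.

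For the lower bound I would restrict the integration to the ``deep interior'' $B_{t-K}$ for a fixed large constant $K$, observing that $\vol(B_{t-K})\asymp\rho(t)$ in both geometries (in the hyperbolic case $\vol(B_{t-K})/\vol(B_t)\to e^{-K(d-1)}$, and in the Euclidean case $\vol(B_{t-K})\geq \kappa_d(t/2)^d$ for $t\geq 2K$). For $x\in B_{t-K}$, Lemma \ref{lem:DOloc} ensures that $D_xF_t^{(\alpha)}(\eta) = X(\eta)$ on the event $\{R(x,\eta)\leq K\}$, where $X(\eta)$ denotes the analogous ``global'' add-one cost defined without the $B_t$-restriction. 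By isometry invariance of $\eta$ the law of $X$ is independent of $x$. Combining the uniform fourth-moment bound from Lemma \ref{lem:DxBounds} with the tail estimate from Lemma \ref{lem:exptails3} via Cauchy--Schwarz then gives
\[
\EE(D_xF_t^{(\alpha)})^2 \;\geq\; \EE[X^2 \ind{R(x,\eta)\leq K}] \;\geq\; \EE X^2 - (\EE X^4)^{1/2}\,\PP(R(x,\eta)>K)^{1/2} \;\geq\; \tfrac{1}{2}\EE X^2
\]
for $K$ sufficiently large, which upon integration over $B_{t-K}$ produces the claimed bound provided $\EE X^2 > 0$, independently of $x$ and $t$.

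The hard part is the positivity of $\EE X^2$, i.e.\ showing that the stationary add-one cost is genuinely non-degenerate. I would establish this by exhibiting an explicit positive-probability event $G$ on which $X$ is bounded away from zero. A natural candidate is the event that $\eta$ contains a simplex-forming configuration of $d+1$ points in small disjoint cells at distance of order $r_0$ from the origin (for some fixed small $r_0>0$), together with an additional ``shielding'' configuration of further points that already closes the convex hulls of the simplex points without requiring the origin as a convex neighbour. On $G$, inserting the origin produces exactly $d+1$ new outgoing edges of length of order $r_0$ and no other changes, so $X$ is deterministically bounded below by a positive constant depending only on $\alpha$ and $d$. The delicate point is designing this shielding configuration in both the Euclidean and hyperbolic settings and verifying that it genuinely prevents all deletions upon insertion of the origin; the quantitative control over the stabilization radius developed in Section \ref{sec:RadiusStab} is precisely what makes this feasible.
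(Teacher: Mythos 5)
Your overall strategy coincides with the paper's: apply Proposition \ref{thm:ST1.1}, bound the double-difference integral above by a constant times $\rho(t)$ exactly as you describe (Cauchy--Schwarz, the bounds of Lemma \ref{lem:DxBounds}, boundary-shell integration), and reduce the lower bound to the non-degeneracy of the add-one cost for points deep inside $B_t$. Your reformulation via the stationary global add-one cost $X$ on the event $\{R(x,\eta)\le K\}$ is a legitimate repackaging of the paper's direct estimate of $\EE(D_xF_t^{(\alpha)})^2$ for $x$ with $B(x,1)\subset B_t$ (with the minor caveat that $B_{t-K}$ is empty for $t\le K$, so the range $2\le t<2K$ needs a separate remark; the paper's choice of radius $1$ covers all $t\ge 2$ at once).

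The genuine gap is that you never prove $\EE X^2>0$: the ``shielding configuration'' is only announced as a natural candidate, and you yourself flag its construction, and the verification that insertion of $x$ deletes no edges, as the delicate point. This is precisely the mathematical core of the lemma and occupies most of the paper's proof. Concretely, the paper empties $B(x,1-\epsilon)$, places one point of $\eta$ in each of finitely many cone-shaped sectors of the thin annulus $B(x,1)\setminus B(x,1-\epsilon)$, and proves a purely geometric covering claim: for every $y\notin B(x,1-\epsilon)$ and every direction $v\in S^{d-1}(y)$ lying in the half-space towards $x$, the half-ball $S(y,v,d(x,y))$ contains a set of the form $D\cap B(x,1)\setminus B(x,1-\epsilon)$, where $D$ is the exponential image of a cone at $x$ of fixed angular radius depending only on $\epsilon$, and hence contains one of the placed points at distance from $y$ smaller than $d(x,y)$. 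This is what guarantees simultaneously that $x$ gains at least $d+1$ edges of length at least $1-\epsilon$ and that no point outside $B(x,1-\epsilon)$ loses an edge (every half-space at $y$ that $x$ could help cover is already covered by a strictly nearer point, so no stopping radius shrinks), and it is shown to survive an $\epsilon'$-perturbation of the placed points so that the corresponding event has positive probability. Establishing this claim requires explicit work with tangent-space projections, the exponential map and isometry invariance, carried out so as to be valid in both $\RR^d$ and $\HH^d$; it is not a consequence of the stabilization-radius tail bounds of Section \ref{sec:RadiusStab}, which control how far changes can propagate but say nothing about whether the net change is nonzero. Until such a construction is actually carried out, the lower bound on $\int\EE(D_xF_t^{(\alpha)})^2\,\vol(dx)$ — and with it the lemma — is not established.
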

\begin{proof}
    We use Proposition \ref{thm:ST1.1}. First note that
    \begin{align*}
        \int_{\XX^d\times\XX^d} \EE (D_{x,y} F_t^{(\alpha)})^2 \,\vol(dx)\vol(dy)
        &= \int_{\XX^d\times\XX^d} \EE [\mathbbm{1}\{D_{x,y} F_t^{(\alpha)} \ne 0 \} (D_{x,y} F_t^{(\alpha)})^2] \,\vol(dx)\vol(dy)\\
        &\le \int_{\XX^d\times\XX^d} \PP(D_{x,y} F_t^{(\alpha)} \ne 0)^{1/2} (\EE(D_{x,y} F_t^{(\alpha)})^4)^{1/2} \,\vol(dx)\vol(dy).
    \end{align*}
    By Lemma~\ref{lem:DxBounds}, this can be bounded by
    \begin{multline*}
        C \int_{B_t} \int_{\XX^d} \exp(-c \rho(d(x,y)/2)) \,\vol(dx)\vol(dy)\\
        + C \int_{\XX^d \setminus B_t} \int_{\XX^d} \exp(-c \rho(d(x,\partial B_t)/2))\exp(-c \rho(d(x,y)/2)) \,\vol(dx)\vol(dy)
    \end{multline*}
    for some constants $C,c>0$ depending only on $\alpha$ and the choice of $\XX^d$. Using standard spherical coordinates in the Euclidean case and the polar integration formula \eqref{eq:PolIntHypGeom} for the hyperbolic space, {it} is not hard to see that
    \[
    \sup_{x \in \XX^d} \int_{\XX^d} \exp(-c \rho(d(x,y)/2)) \,\vol(dy) < \infty
    \]
    and hence we are left to bound
    \begin{equation}\label{eq:P2.1}
    C \vol(B_t) + C \int_{\XX^d \setminus B_t} \exp(-c \rho(d(x,\partial B_t)/2)) \,\vol(dx).
    \end{equation}
    
    In the Euclidean case, $\rho(t)=\vol(B_t)$ and the second term, after changing to polar coordinates, can be written as
    \begin{align*}
        C d\kappa_d \int_t^\infty r^{d-1} \exp(-c \kappa_d2^{-d} (r-t)^d) \,dr 
        &= C d\kappa_d \int_0^\infty (u+t)^{d-1} \exp(-c \kappa_d2^{-d} u^d)\, du\\
        &\leq C d\kappa_d \int_0^\infty 2^{d-1} (u^{d-1} +t^{d-1}) \exp(-c \kappa_d2^{-d} u^d) \,du \\
        &= O(t^{d-1}).
    \end{align*}
    Hence there is a constant $C_1>0$ depending on $\alpha$ (and $\RR^d$) only such that
    \[
    \int_{\XX^d\times\XX^d} \EE (D_{x,y} F_t^{(\alpha)})^2 \,\vol(dx)\vol( dy) \leq C_1\kappa_d t^d = C_1 \rho(t)
    \]
    for $\XX^d=\RR^d$.
    
    In the hyperbolic case, by \eqref{eq:hypvol}, one has that
    \[
    \vol(B_t) \leq \frac{\Gamma_d}{\gamma_d}\rho(t).
    \]
    For the second term of \eqref{eq:P2.1}, it holds that for $x \notin B(p,t)$, one has $d(x,B(p,t)) = d(x,p)-t$. By changing to hyperbolic polar coordinates using \eqref{eq:PolIntHypGeom}, as done in the proof of Lemma~\ref{lem:exptails3}, we deduce this term is bounded by
    \[
    Cd\kappa_d \int_{S^{d-1}(p)} \int_t^\infty \sinh(r)^{d-1} \exp(-c\rho((r-t)/2)) \,dr\sigma_p(dv).
    \]
    Using again that $\sinh(x) \leq e^x$ for $x>0$, and introducing the change of coordinates $u = r-t$, it follows that the above is bounded by
    \[
    Cd\kappa_d \int_0^\infty \exp((d-1)(u+t)) \exp(-c\rho(u/2))\, du = \frac{Cd\kappa_d}{\gamma_d} \rho(t) \int_0^\infty e^{(d-1)u-c\rho(u/2)} \,du \le C_2 \rho(t)
    \]
    for some constant $C_2>0$ depending on {$\alpha$} (and $\HH^d$) only. Thus, we conclude that in the hyperbolic case $\XX^d=\HH^d$,
    \[
    \int_{\XX^d\times\XX^d} \EE (D_{x,y} F_t^{(\alpha)})^2 \,\vol(dx)\vol( dy) \leq C_3 \rho(t),
    \]
    for some constant $C_3>0$ which depends on $\alpha$ and $\HH^d$. 
    
    It remains to show a lower bound of order $\rho(t)$ for the quantity
    \[
    \int_{\XX^d} \EE (D_x F_t^{(\alpha)})^2 \,\vol(dx).
    \]
    Let $x \in B_t$ such that $B(x,1) \subset B_t$. Recalling the notation established in \eqref{eq:S(z,v,r)}, we start by \textbf{claiming} that there exists an $\epsilon>0$ small enough such that for any $y \notin B(x,1-\epsilon)$ and any $v \in S^{d-1}(y) \cap H_{\exp_y^{-1}(x)}(y)$, the set $S(y,v,d(x,y))$ contains a subset of the form $D \cap B(x,1) \setminus B(x,1-\epsilon)$, where $D=\exp_x(C)$ for some cone $C \subset T_x\XX^d$, centred at $x$ and of angular radius $\beta$, where $\beta$ depends only on $\epsilon$. This is represented by the red area in Figure~\ref{fig:VarF}. To \textbf{prove the claim}, define $x':= \exp_y^{-1}(x)$ and denote by $\mathcal{H}_{x'}(y)$ the hyperplane in $T_y\XX^d$ passing through $y$ and {has} normal vector $x'-y$. Let $v'$ be the projection of $v$ onto $\mathcal{H}_{x'}(y)$. Then it holds that $H_{v'}(y) \cap H_{x'}(y) \subset H_v(y) \cap H_{x'}(y)$. Indeed, one has
    \[
    v' = v - \frac{\langle x'-y, v-y \rangle}{\lVert x'-y \rVert^2} (x'-y).
    \] Hence for any $w \in H_{v'}(y) \cap H_{x'}(y)$, we have
    \[
    \langle v-y,w-y \rangle = \underbrace{\langle v'-y, w-y \rangle}_{\geq 0 \text{, as } w \in H_{v'}(y)} + \frac{1}{\lVert x'-y \rVert^2} \underbrace{\langle x'-y, v-y \rangle}_{\geq 0 \text{, as } v \in H_{x'}(y)} \cdot \underbrace{\langle x'-y, w-y \rangle}_{\geq 0 \text{, as } w \in H_{x'}(y)} \geq 0,
    \]
    therefore $w \in H_v(y)$.  Next, let $y'$ be the projection of $y$ onto $\partial B(x,1-\epsilon)$ for some small $\varepsilon$. Since $\exp_y^{-1}(y') \in H_{x'}(y)$, it now holds that
    \[
    \exp_y(\exp_y^{-1}(y') + H_{v'}(y) \cap H_{x'}(y)) \subset \exp_y(H_{v'}(y) \cap H_{x'}(y)) \subset \exp_y(H_v(y)).
    \]
    Moreover, $B(y',d(x,y')) \subset B(y,d(x,y))$ and we note that, due to the fact that the hyperplanes $\mathcal{H}_{x'}(y)$ and $\mathcal{H}_{v'}(y)$ are perpendicular, the set
    \[
    \exp_y(\exp_y^{-1}(y') + H_{v'}(y) \cap H_{x'}(y)) \cap B(y',d(x,y')) \cap B(x,1) \setminus B(x,1-\epsilon)
    \]
    is always the same up to isometry, independently of the choice of $y$ and $v$. One can now choose $\epsilon>0$ small enough such that there is a cone $C\subset T_x\XX^d$ with opening angle $\beta>0$ depending on $\epsilon$ such that for $D:=\exp_x(C)$, the intersection $D \cap B(x,1)\setminus B(x,1-\epsilon)$ is included in this set. This proves the claim.

    We choose $\epsilon,\beta>0$ as above. Cover $T_x\XX^d$ with $K=K_{d,\epsilon}$ cones $C_1,\hdots,C_K$ of angular radius $\beta/2$ with apex at $x$.  Choose $K$ points $z_1,\ldots,z_K$  such that  $1-\epsilon < d(x,z_i) < 1$ and $z_i$ lies within the projection $D_i:=\exp_x(C_i)$ of cone $C_i$. We \textbf{claim} that for each point $y$ outside $B(x,1-\epsilon)$, the {set} $S(y,\exp_y^{-1}(x),1)$ is completely covered by the union $\bigcup_{0<d(y,z_i)<d(x,y)}S(y,\exp_y^{-1}(z_i),1)$. To \textbf{prove the claim}, assume this is not the case. Then there is a point $y$ outside $B(x,1-\epsilon)$ and a point $v \in S^{d-1}(y)$ such that the interior of $S(y,v,d(x,y))$ contains non of the points $z_1,\hdots,z_K$. However, by the claim we proved earlier in this proof, the set $S(y,v,d(x,y))$ always contains a set of the form $D \cap B(x,1) \setminus B(x,1-\epsilon)$, where $D=\exp_x(C)$ and $C \subset T_x\XX^d$ is a cone of apex $x$ and angular radius $\beta$. The cone $C$ must contain one of the cones $C_i$, and hence $D_i \cap B(x,1) \setminus B(x,1-\epsilon)$ is included inside of $S(y,v,d(x,y))$.

    Note that the above discussion continues to hold if we disturb the points $z_1,\hdots,z_K$ by some $\epsilon'>0$ chosen small enough.

    Assume now that the point set $\eta$ is such that $B(x,1-\epsilon)$ is empty, $\eta$ contains exactly one point $z_i'$ in each of $B(z_1,\epsilon'),\hdots,B(z_K,\epsilon')$ and no other points of $\eta$ are within $B(x,1)$.
    
    In this situation, points outside $B(x,1-\epsilon)$ (including $z_1',\hdots,z_K'$) might gain edges, but will not lose any if $x$ is added. Upon the addition of $x$, at least $d+1$ edges are added from $x$ to some of the points $z_i$, since $x$ connects to its neighbours until the convex hull is completed. As $d(x,z_i)>1-\epsilon$, we conclude that
    \[
    |D_x F_t^{(\alpha)}| \ge (d+1)(1-\epsilon)^{\alpha} \mathbbm{1}\Big\{\eta(B(z_1,\epsilon')) = \ldots = \eta(B(z_K,\epsilon'))=1, \eta\Big(B(x,1) \setminus \bigcup_{i=1}^KB(z_i,\epsilon')\Big) = 0\Big\}.
    \]
    It follows that $\EE(D_x F_t^{(\alpha)})^2 \ge c > 0$ for some constant $c$ depending, by isometry invariance of the volume, only on $\alpha$ and the choice of $\epsilon$, $\epsilon'$ and $\XX^d$, and hence,
    \[
    \int_{\XX^d} \EE(D_xF_t^{(\alpha)})^2 \,\vol(dx) \ge \int_{B_{t-1}} \EE(D_xF_t^{(\alpha)})^2\,\vol( dx) \ge c \vol(B_{t-1}) \ge c' \rho(t)
    \]
    for another constant $c'$ only depending on the same parameters. In particular, this proves \eqref{CondST} for a suitable constant $c > 0$ only depending on $\alpha$ and the choice of $\epsilon$ and $\XX^d$, so that we obtain 
    \[
    \Var F_t^{(\alpha)} \ge c \rho(t).
    \]
    This completes the argument.
    \end{proof}

    \begin{figure}
        \centering
        \begin{subfigure}[t]{.4\textwidth}
            \centering
            \includegraphics[width=\textwidth]{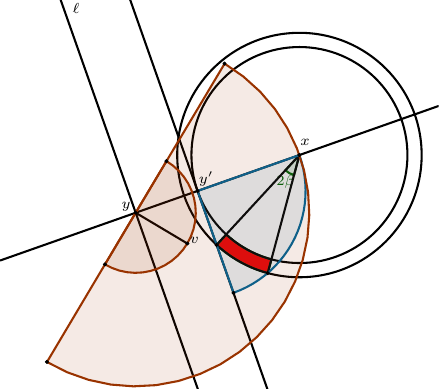}
            \caption{The construction for the variance in the Euclidean case. Note that for any $v$ on the boundary of $S(y,x,1)$, the {set} $S(y,v,d(x,y))$ always contains the blue area quarter-sphere. The blue area will always contain a set of the form $C \cap B(x,1) \setminus B(x,1-\epsilon)$, represented by the red area.}
            \label{fig:EuclVarF}
        \end{subfigure}%
        \hspace{.099\textwidth}
        \begin{subfigure}[t]{.4\textwidth}
            \centering
            \includegraphics[width=\textwidth]{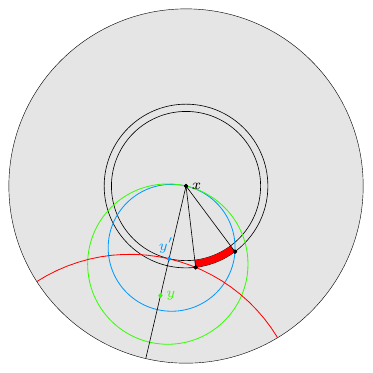}
            \caption{The construction in hyperbolic space, represented in the Poincaré disk model. Given any green point $y$ outside $B(x,1-\epsilon)$, we can again find a red area determined by a certain angle at the origin.}
            \label{fig:HypVarF}
        \end{subfigure}
        \caption{The construction of the variance lower bound for $F^{(\alpha)}_t$, in both the Euclidean and the hyperbolic case.}
        \label{fig:VarF}
    \end{figure}
	
After these preparations, the proof of Theorem \ref{thm:main} is now easily completed.

\begin{proof}[Proof of Theorem \ref{thm:main}]
    We apply Proposition~\ref{thm:LPS6.1}, which is justified by Lemma~\ref{lem:DxBounds}. Since by Lemma~\ref{lem:VarBd} we know that
    \[
    \Var F_t^{(\alpha)} \geq c_1 \rho(t),
    \]
    for $c_1>0$ only depending on $\alpha$ and the choice of $\XX^d$, it suffices to show that there is a constant $c_2>0$ depending only on $\alpha$ and the choice of $\XX^d$ such that 
    \begin{align*}
        &\int_{\XX^d} \left( \int_{\XX^d} \PP(D_{x,y} F_t^{(\alpha)} \neq 0)^{1/20} \,\vol(dy) \right)^2 \vol(dx),\\
        &\int_{\XX^d} \PP(D_x F_t^{(\alpha)} \neq 0)^\beta \,\vol(dx),\\
        & \int_{\XX^d} \int_{\XX^d} \PP(D_{x,y} F_t^{(\alpha)} \neq 0)^{1/20} \,\vol(dy) \vol(dx)
    \end{align*}
    are all bounded by $c_2\rho(t)$ for every $\beta>0$. This follows by computations akin to the ones done in the first part of the proof of Lemma~\ref{lem:VarBd}. Finally, we may clearly replace $\rho(t)$ by $\vol(B_t)$ by definition in the Euclidean case and by \eqref{eq:hypvol} in the hyperbolic case.
\end{proof}

\subsection{Proof of Theorem \ref{thm:main2}}

Large parts of the proof of Theorem \ref{thm:main2} are very similar to the proof of Theorem \ref{thm:main}. The only major difference lies in the proof of an analogue of Lemma \ref{lem:VarBd}, i.e., a lower variance bound for $G_t^{(k)}$ based on Proposition \ref{thm:ST1.1}.

\begin{lemma}\label{lem:VarBd2}
    For $t \geq 2$ it holds that
    \[
    \Var G_t^{(k)} \ge c \rho(t),
    \]
    where $c>0$ is some constant depending on $k$ and the choice of $\XX^d$ only.
\end{lemma}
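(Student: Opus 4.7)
I plan to follow the blueprint of Lemma~\ref{lem:VarBd} and apply Proposition~\ref{thm:ST1.1} to $G_t^{(k)}$. The two ingredients I need are an upper bound of order $\rho(t)$ on $\int_{\XX^d\times\XX^d}\EE(D_{x,y}G_t^{(k)})^2\,\vol(dx)\vol(dy)$ and a lower bound of the same order on $\int_{\XX^d}\EE(D_xG_t^{(k)})^2\,\vol(dx)$. For the upper bound, I would first prove the analogue of Lemma~\ref{lem:DxBounds} for $G_t^{(k)}$. The key observation is that Lemma~\ref{lem:DOloc} supplies the same radius of stabilization $R(x,\eta)$, and since each point in $B(x,R(x,\eta))$ contributes at most one unit to the outdegree count, one has the crude bound $|D_xG_t^{(k)}|\leq \eta(B(x,R(x,\eta)))+1$ and an analogous estimate for $|D_{x,y}G_t^{(k)}|$. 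Moment bounds on these quantities then follow from Lemma~\ref{lem:etamom}, while the exponential tail decay of $\PP(D_xG_t^{(k)}\ne0)$ for $x\notin B_t$ and of $\PP(D_{x,y}G_t^{(k)}\ne0)$ follows from exactly the same stabilization arguments as in the proof of Lemma~\ref{lem:DxBounds}, since those arguments depended only on the structure of the graph and not on which functional is attached to it. The integrations from the first part of the proof of Lemma~\ref{lem:VarBd} then give the $\rho(t)$ upper bound.

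For the lower bound I would exploit the fact that $G_t^{(k)}$ is integer-valued, so $(D_xG_t^{(k)})^2\geq \mathbbm{1}\{D_xG_t^{(k)}\ne0\}$, and it suffices to show that $\PP(D_xG_t^{(k)}\ne0)\geq c>0$ uniformly for $x$ with $B(x,r_0)\subset B_t$, where $r_0$ is a fixed moderately large constant. My plan is to construct two local events $E_1,E_2$, each determined by $\eta\cap B(x,r_0)$ and each of probability bounded below by a positive constant uniformly in $x$, with the following two properties: on each $E_i$ the stabilization radius satisfies $R(x,\eta)<r_0$ with positive conditional probability, so that $D_xG_t^{(k)}$ is fully determined by $\eta\cap B(x,r_0)$; and the deterministic values $c_1,c_2$ of $D_xG_t^{(k)}$ realized on $E_1,E_2$ respectively are different. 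Since $c_1\ne c_2$, at least one of them is non-zero, which gives the required uniform lower bound, and integrating over $x\in B_{t-r_0}$ completes the argument via Proposition~\ref{thm:ST1.1}.

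Concretely, the events $E_i$ would be of the form ``$\eta$ contains exactly one point in each of a prescribed finite collection of disjoint small balls in $B(x,r_0)$ and no other points there''. The inner layer of prescribed balls is positioned around anchor points $z_1,\dots,z_k$ (resp.\ $z_1,\dots,z_{k+1}$) in an annulus near $x$, chosen in general position so that $x$ is forced to have outdegree exactly $k$ upon insertion in the first configuration and outdegree exactly $k+1$ in the second (this is possible since $k\geq d+1$, using the same sort of cone covering as in Lemma~\ref{lem:VarBd}, cf.\ Figure~\ref{fig:VarF}). An outer ``shell'' of prescribed points near $\partial B(x,r_0)$ is added to ensure, together with the inner anchors, that each anchor $z_i$ already has its convex hull closed within $B(x,r_0)$, so that $R_1(z_i,\eta)$ and $R_2(x,\eta)$ are both bounded deterministically in terms of $r_0$. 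On these events the restriction of the NNE graph to $B(x,r_0)$ and the set of outdegrees of the anchors (before and after insertion of $x$) are completely determined by the local configuration, and one computes $c_1$ and $c_2$ directly from the geometry.

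The main obstacle is the construction outlined in the previous paragraph. Unlike in Lemma~\ref{lem:VarBd}, where a single explicit configuration forced $|D_xF_t^{(\alpha)}|\geq(d+1)(1-\epsilon)^\alpha$ because every new edge contributed its non-negative length to $F_t^{(\alpha)}$, here cancellations are a genuine concern: the insertion of $x$ changes the outdegrees of the anchors as well, and the sign and magnitude of the contribution to $G_t^{(k)}$ from each change depends on the specific outdegrees. This is precisely why I plan to use \emph{two} events rather than a single one: even if one cannot arrange for $D_xG_t^{(k)}$ to be exactly $+1$, one can realize two configurations producing distinct discrete values of $D_xG_t^{(k)}$, which is enough to rule out that the difference operator vanishes almost surely on a set of positive probability. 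Once this discrete-difference construction is carried out in both the Euclidean and hyperbolic settings (the hyperbolic case is essentially the same via the exponential map at $x$), the remainder of the proof is a routine adaptation of the computation in Lemma~\ref{lem:VarBd}.
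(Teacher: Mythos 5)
Your overall skeleton (Proposition~\ref{thm:ST1.1}, the stabilization radius from Lemma~\ref{lem:DOloc}, the bound $|D_xG_t^{(k)}|\le 1+\eta(B(x,R(x,\eta)))$ with moments from Lemma~\ref{lem:etamom}, tail bounds as in Lemma~\ref{lem:DxBounds}, and then a uniform lower bound on $\EE(D_xG_t^{(k)})^2$ for $x$ well inside $B_t$, integrated over $B_{t-1}$) coincides with the paper's. The gap is at the pivotal step, which you yourself identify as ``the main obstacle'' and then do not close. Your two-event scheme needs $c_1\neq c_2$, but this does not follow from arranging $\overline{\mathrm{deg}}(x)=k$ in the first configuration and $\overline{\mathrm{deg}}(x)=k+1$ in the second: $D_xG_t^{(k)}$ equals $\ind{\overline{\mathrm{deg}}(x)=k}$ \emph{plus} the signed count of other points whose outdegree moves into or out of the value $k$ when $x$ is inserted, and these side-effect terms (coming from anchors and shell points that connect to $x$) can differ between your two configurations; nothing you say rules out, e.g., $c_1=1-1=0$ and $c_2=0+0=0$. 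Asserting that ``one computes $c_1$ and $c_2$ directly from the geometry'' postpones exactly the cancellation problem the construction is supposed to solve, and if you could control the side effects precisely enough to guarantee $c_1\neq c_2$, you could just as well control them to vanish, which is what an actual proof must do. A secondary weakness is the claim that $D_xG_t^{(k)}$ is deterministic on $E_i\cap\{R(x,\eta)<r_0\}$: this requires showing that the before- and after-insertion outdegrees of \emph{every} point that connects to $x$ are determined by the prescribed local configuration (your outer shell is meant to do this for the anchors, but the argument is only sketched), whereas points that do not connect to $x$ are harmless since their degrees do not change.

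The paper avoids both issues with one device. It prescribes points $z_1,\dots,z_{d+1}$ and $w_1,\dots,w_{k-d-1}$ in inner rings forcing $\overline{\mathrm{deg}}(x)=k$, together with points $y_1,\dots,y_K$ in an outer ring $R_3$ chosen so that every $y\in\XX^d\setminus B(x,\tfrac23)$ has at least $k+2$ of the $y_i$ closer to it than $x$. Consequently any point other than $x$ whose outdegree changes (i.e.\ any point that connects to $x$) must already have connected to at least $k+1$ points, so its outdegree exceeds $k$ both before and after the insertion and it contributes nothing to $D_xG_t^{(k)}$; hence $D_xG_t^{(k)}=1$ identically on the local event, \emph{irrespective of the configuration outside} $B(x,1)$, so no conditioning on the stabilization radius and no second configuration are needed. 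To repair your proposal you would have to supply an analogous mechanism guaranteeing that the side-effect terms are pinned down (ideally to zero), at which point the two-event trick becomes superfluous.
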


\begin{proof}
    \begin{figure}[t]
        \centering
        \includegraphics[width=.8\textwidth]{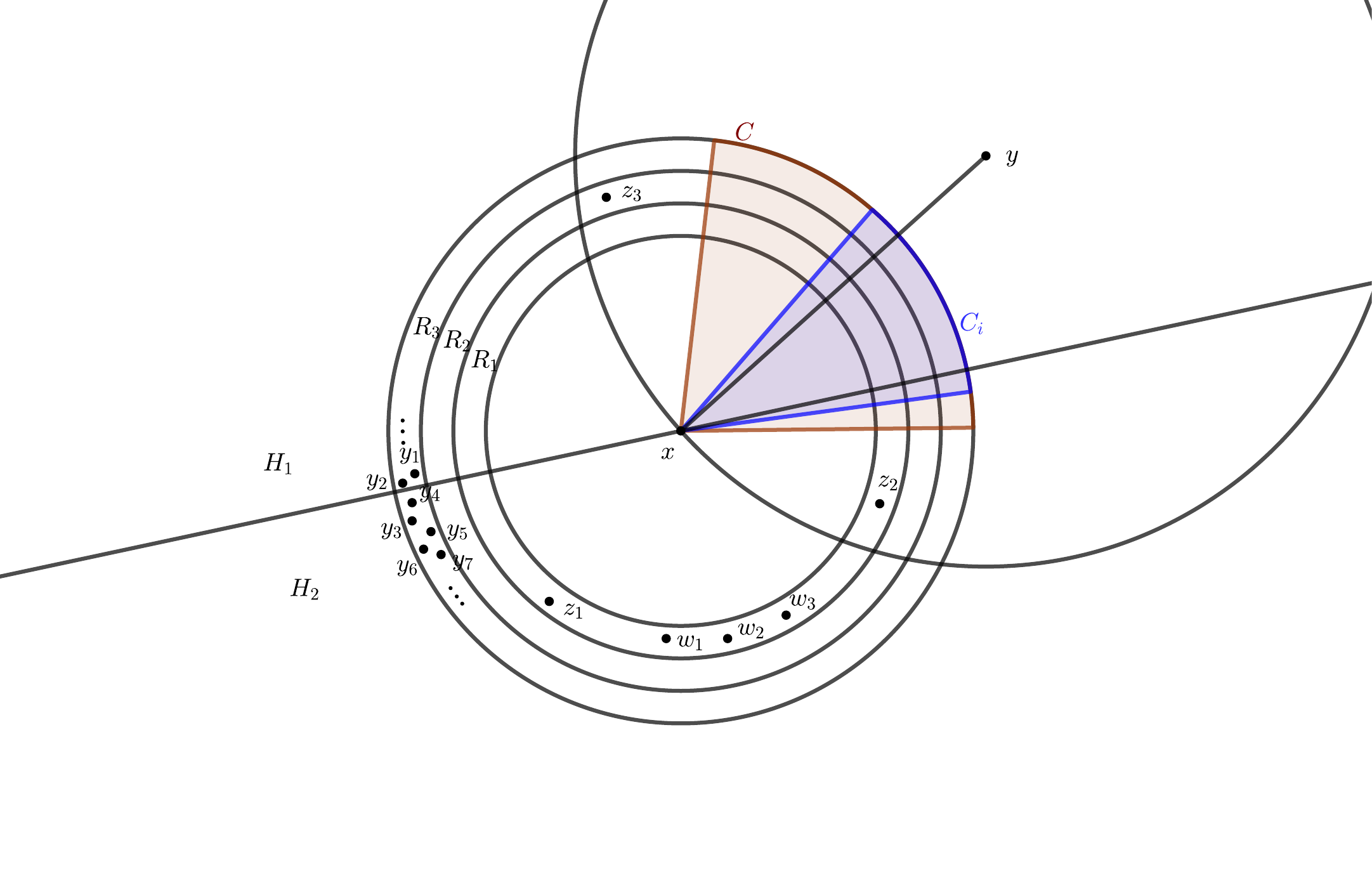}
        \caption{Illustration of the construction in the Euclidean case. Here $k=6$.}
        \label{fig:DegVarEucl}
    \end{figure}
    Let $x \in \XX^d$ such that $B(x,1) \subset B_t$. We construct three rings around $x$ by defining
    \begin{align*}
        R_1 &:= \Big\{y \in \XX^d \colon \frac{2}{3} < d(x,y) < \frac{2}{3} + \frac{1}{9}\Big\},\\
        R_2 &:= \Big\{y \in \XX^d \colon \frac{2}{3} + \frac{1}{9} < d(x,y) < \frac{2}{3} + \frac{2}{9}\Big\},\\
        R_3 &:= \Big\{y \in \XX^d \colon \frac{2}{3} + \frac{2}{9} < d(x,y) < 1\Big\}.
    \end{align*}
    See Figure~\ref{fig:DegVarEucl} for an illustration. Split $\XX^d$ into two half-spaces $H_1$, $H_2$ such that $x$ lies on the boundary between the two. Now assume we have points as follows:
    \begin{enumerate}[(1)]
        \item Let $z_1, \ldots, z_d \in R_1 \cap H_2$ and $z_{d+1} \in R_2 \cap H_1$ such that $x$ is included in the convex hull of $z_1, \ldots, z_{d+1}$.\label{item:convhull}
        \item Let $w_1, \ldots, w_{k-d-1} \in R_1 \cap H_2$. 
        \item Let $y_1, \ldots, y_K \in R_3$ such that for every $y \in \XX^d \setminus B(x,\frac{2}{3})$, there are at least $k+2$ points among $y_1, \ldots, y_K$ which are closer to $y$ than $x$.\label{item:k+2nbrs}
    \end{enumerate}
    See again Figure~\ref{fig:DegVarEucl} for an illustration.

    The validity of the construction \eqref{item:convhull} is equivalent to choosing $z_1,\hdots,z_{d+1}$ with $z_1, \ldots, z_d \in R_1 \cap H_2$ and $z_{d+1} \in R_2 \cap H_1$ such that for every $v \in S^{d-1}(x)$, the set $S(x,v,8/9)$ is non-empty. This can be achieved by constructing cones in the tangent space and projecting them down to $\XX^d$, making sure that every projection contains a point. To see \eqref{item:k+2nbrs}, we argue as follows. 
    Let $y \in \XX^d \setminus B(x,\frac{2}{3})$ and define $y'$ as the projection of $y$ onto $\partial B(x,\frac{2}{3})$. Then $B(y',d(x,y')) \subset B(y,d(x,y))$ and the set $B(y',d(x,y')) \cap B(x,1)$ is, up to isometry, the same irrespective of the choice of $y$. Moreover, $B(y',d(x,y')) \cap B(x,1)$ contains a set of the form $D \cap B(x,1)$, where $D=\exp_x(C)$ for some cone $C \subset T_x\XX^d$ with apex at $x$ and a given angular radius $\beta>0$. Now, cover $T_x\XX^d$ with cones $C_1, \ldots, C_K$ with apex at $x$ and angular radius $\frac{\beta}{2}$ and put $k+2$ points $y_i, \ldots, y_{i+k+1}$ into $\exp_x(C_i) \cap R_3$. There is now a cone $C_i$ such that $\exp_x^{-1}(y) \in C_i$. Moreover, $C_i \subset C$, and so $\exp_x(C_i) \cap R_3 \subset D \cap B(y,d(x,y))$. Hence, all points $y_i, \ldots, y_{i+k+1}$ are closer to $y$ than $x$. 
    
    Assume now that $\eta$ contains the points $z_1, \ldots, z_{d+1}, w_1, \ldots, w_{k-d-1}, y_1, \ldots, y_K$ and no other points within $B(x,1)$. Then, upon the addition of $x$, the following holds: first, $\overline{\mathrm{deg}}(x)=k$. Indeed, the half-ball $H_1 \cap B(x,\frac{2}{3} + \frac{1}{9})$ does not contain any point of $\eta$, and the half-ball $H_1 \cap B(x,\frac{2}{3} + \frac{2}{9})$ only contains the point $z_{d+1}$. Hence, $x$ must connect to $z_{d+1}$. Since $d(x,z_i) < \frac{2}{3} + \frac{1}{9}$ for all $i = 1, \ldots, d$ and $d(x,w_i) < \frac{2}{3} + \frac{1}{9}$ for all $i=1, \ldots, k-d-1$, the point $x$ must also connect to $z_1, \ldots, z_d, w_1, \ldots, w_{k-d-1}$. The placement of the points $z_1, \ldots, z_{d+1}$ ensures that the convex hull is complete, and hence, $x$ will not connect to any point outside of $B(x,\frac{2}{3} + \frac{2}{9})$.

    Moreover, any point $y \in \eta$ such that $\overline{\mathrm{deg}}(y,\eta) \ne \overline{\mathrm{deg}}(y,\eta+\delta_x)$ is such that $\overline{\mathrm{deg}}(y,\eta) > k$ and $\overline{\mathrm{deg}}(y,\eta+\delta_x) > k$. To see this, first note that $\overline{\mathrm{deg}}(y)$ can only change with the addition of $x$ if $y$ connects to $x$. In this case, $y$ must also connect to any point of $\eta$ within $B(y,d(x,y))$. Since $\eta \cap B(x,\frac{2}{3}) = \varnothing$, we have $d(x,y) > \frac{2}{3}$ and hence by construction, $B(y,d(x,y))$ contains at least $k+2$ of the points $y_1, \ldots, y_K$. It follows that $\overline{\mathrm{deg}}(y,\eta+\delta_x) \ge k+1$. Moreover, $y$ must have been connected to all points of $\eta$ within $B(y,d(x,y))$ prior to the addition of $x$ (otherwise $y$ would not connect to $x$), and hence $\overline{\mathrm{deg}}(y,\eta) \ge k+1$.

    Altogether, under the above assumption on $\eta$ we have that
    \begin{align*}
        D_x G_t^{(k)} &= \ind{\overline{\mathrm{deg}}(x)=k} + \sum_{y\in \eta|_{B_t}} \ind{\overline{\mathrm{deg}}(y,\eta+\delta_x)=k} \ind{\overline{\mathrm{deg}}(y,\eta)\ne k}\\
        &\phantom{=} - \sum_{y\in \eta|_{B_t}} \ind{\overline{\mathrm{deg}}(y,\eta+\delta_x)\ne k} \ind{\overline{\mathrm{deg}}(y,\eta)= k}\\
        &= 1 + 0 - 0 = 1.
    \end{align*}
    Moreover, the arguments given above remain true when every point of $z_1, \ldots, z_{d+1},w_1, \ldots, w_{k-d-1},y_1,\linebreak[2] \ldots, y_K$ is slightly perturbed by distance at most $\epsilon > 0$, where $\epsilon$ is small enough. Hence, on the event of positive probability
    \begin{align*}
        \{ &\eta(B(z_1,\epsilon))=1, \ldots, \eta(B(z_{d+1},\epsilon))=1, \eta(B(w_1,\epsilon))=1, \ldots, \eta(B(w_{k-d-1},\epsilon))=1,\\
        &\eta(B(y_1,\epsilon))=1, \ldots, \eta(B(y_K,\epsilon))=1, \eta(B(x,1))=k+K\},
    \end{align*}
    we have $D_xG_t^{(k)} = 1$ and thus, $\EE(D_x G_t^{(k)})^2 \ge c > 0$ for some constant $c$ only depending on $k$ and the choice of $\epsilon$ and $\XX^d$. The remaining parts of the proof follow from arguments similar to the ones used in the proof of Lemma \ref{lem:VarBd}.
\end{proof}

\begin{proof}[Proof of Theorem \ref{thm:main2}]
    To begin, recall that $R(x,\eta)$ from Lemma \ref{lem:DOloc} is a radius of stabilization for $G_t^{(k)}$. Moreover, note that
    \begin{align*}
    D_xG_t^{(k)} &= \ind{\overline{\mathrm{deg}}(x)=k}\ind{x \in B_t} + \sum_{y\in \eta|_{B_t}} \ind{\overline{\mathrm{deg}}(y,\eta+\delta_x)=k} \ind{\overline{\mathrm{deg}}(y,\eta)\ne k} - \sum_{y\in \eta|_{B_t}} \ind{\overline{\mathrm{deg}}(y,\eta+\delta_x)\ne k} \ind{\overline{\mathrm{deg}}(y,\eta)= k}
    \end{align*}
    and hence,
    \begin{align*}
        |D_xG_t^{(k)}| &\le 1 + \sum_{y\in \eta|_{B_t}} \ind{\overline{\mathrm{deg}}(y,\eta) \ne \overline{\mathrm{deg}}(y,\eta+\delta_x)}\le 1 + \eta(B(x,R(x,\eta))).
    \end{align*}
    In particular, applying Lemma \ref{lem:etamom} yields
    \[
    \EE \left| D_x G_t^{(k)} \right|^r \le C
    \]
    for any $r > 0$, where $C>0$ is a constant which depends on $r$ and the choice of $\XX^d$ only.

    Similarly, we have
    \begin{align*}
        |D_{x,y} G_t^{(k)}| &\le |D_x G_t^{(k)}(\eta)| + |D_x G_t^{(k)}(\eta+\delta_y)|\\
        &\le 1 + \eta(B(x,R(x,\eta)) + 1 + 1 + \eta(B(x,R(x,\eta+\delta_y)).
    \end{align*}
    This shows that we also have
    \[
    \EE \left| D_{x,y} G_t^{(k)} \right|^r \le C
    \]
    for any $r > 0$, where $C>0$ is a constant which depends on $r$ and the choice of $\XX^d$ only. Furthermore, the probabilities of $D_xG_t^{(k)} \ne 0$ and $D_{x,y}G_t^{(k)} \ne 0$ are bounded akin to the case of the edge-sums discussed in Lemma \ref{lem:DxBounds}, as the reasoning in the proof of the latter is solely based on the behaviour of the radii of stabilization. Together with Lemma \ref{lem:VarBd2}, we may therefore complete the proof {of the upper bound for both $\diamondsuit=W$ and $\diamondsuit=K$} in the same way as the proof of Theorem \ref{thm:main}.
    
    {To deduce a corresponding lower bound on the rate of convergence with $\diamondsuit=K$ we observe that $G_t^{(k)}$ is a square-integrable integer-valued random variable. It follows from an argument of Englund \cite{Englund} (see also \cite[Section 3.3]{RednossDiss}) that for such random variables the rate of convergence measured with respect to the Kolmogorov distance is always bounded by a constant multiple of the minimum of one and $1$ divided by the square-root of the variance. The lower bound thus follows from this in conjection with Lemma \ref{lem:VarBd2} and the definition of $\rho(t)$.}
\end{proof}

\subsection*{Acknowledgements}

Parts of this paper were written when the authors were participants of the Dual Trimester Program \textit{Synergies between modern probability, geometric analysis and stochastic geometry} at the Hausdorff Research Institute for Mathematics, Bonn. All support is gratefully acknowledged.

H.S.\ was supported by the Deutsche Forschungsgemeinschaft (DFG) via CRC 1283 \emph{Taming uncertainty and profiting from randomness and low regularity in analysis, stochastics and their applications}. T.T.\ was supported by the Luxembourg National Research Fund (PRIDE17/1224660/GPS) and by the UK Engineering and Physical Sciences Research Council (EPSRC) grant (EP/T018445/1). C.T.\ was supported by the Deutsche Forschungsgemeinschaft (DFG) via SPP 2265 \textit{Random Geometric Systems} and via SPP 2458 \textit{Combinatorial Synergies}.

\addcontentsline{toc}{section}{References}

\end{document}